\documentclass[12pt,reqno]{amsart}
\usepackage{fullpage}
\usepackage{times}
\usepackage{amsmath,amssymb,amsthm,url}
\usepackage[utf8]{inputenc}
\usepackage[english]{babel}
\usepackage{comment}
\usepackage{bbm}
\usepackage{enumerate}
\usepackage{bm}
\usepackage{graphicx}
\usepackage{mathrsfs}
\usepackage[colorlinks=true, pdfstartview=FitH, linkcolor=blue, citecolor=blue, urlcolor=blue]{hyperref}

\newtheorem{thm}{Theorem}[section]

\newtheorem{prop}[thm]{Proposition}
\newtheorem{cor}[thm]{Corollary}

\theoremstyle{definition}
\newtheorem{defn}[thm]{Definition}

\newtheorem{rem}[thm]{Remark}

\newcommand{\F}{\mathbb{F}}

\newcommand{\PGL}{\operatorname{PGL}}
\newcommand{\GL}{\operatorname{GL}}
\newcommand{\AG}{\operatorname{AG}}
\newcommand{\PG}{\operatorname{PG}}

\title{Erd\H{o}s--Ko--Rado theorem in Peisert-type graphs}
\author{Chi Hoi Yip}
\address{Department of Mathematics \\ University of British Columbia \\ 1984 Mathematics Road \\ Vancouver  V6T 1Z2 \\ Canada}
\email{kyleyip@math.ubc.ca}
\keywords{Erd\H{o}s-Ko-Rado theorem, Paley graph, Peisert-type graph, maximum clique}
\subjclass[2020]{Primary: 05C25, Secondary: 15A03, 51E15}

\begin{document}

\maketitle

\begin{abstract}
The celebrated Erd\H{o}s--Ko--Rado (EKR) theorem for Paley graphs of square order states that all maximum cliques are canonical in the sense that each maximum clique arises from the subfield construction. Recently, Asgarli and Yip extended this result to Peisert graphs and other Cayley graphs which are Peisert-type graphs with nice algebraic properties on the connection set. On the other hand, there are Peisert-type graphs for which the EKR theorem fails to hold. In this paper, we show that the EKR theorem of Paley graphs extends to almost all pseudo-Paley graphs of Peisert-type. Furthermore, we establish the stability results of the same flavor.
\end{abstract}

\section{Introduction}

Throughout the paper, let $p$ be an odd prime and $q$ a power of $p$. Let $\F_q$ be the finite field with $q$ elements, $\F_q^+$ be its additive group, and $\F_q^*=\F_q \setminus \{0\}$ be its multiplicative group. 

Given an abelian group $G$ and a connection set $S \subset G \setminus \{0\}$ with $S=-S$, the {\em Cayley graph} $\operatorname{Cay}(G, S)$ is
the undirected graph whose vertices are elements of $G$, such that two vertices $g$ and $h$ are adjacent if and only if $g-h \in S$.  A {\em clique} in a graph $X$ is a subset
of vertices in which every two distinct vertices are adjacent. Note that a subset $C$ of vertices in $X=\operatorname{Cay}(G, S)$ is a clique if and only if $C-C \subset S \cup \{0\}$. This naturally connects the concept of cliques in Cayley graphs with many interesting problems from additive combinatorics. 

Next, we introduce the main object in this paper, defined in \cite{AGLY22, AY22}:

\begin{defn}[Peisert-type graphs]\label{defn:peisert-type}
Let $q$ be an odd prime power. Let $S \subset \F_{q^2}^*$ be a union of $m \leq q$ \footnote{The definition in \cite{AY22} has the stronger assumption that $m \leq \frac{q+1}{2}$. Readers are warned to keep this in mind when consulting the results in \cite{AY22}.} cosets of $\F_q^*$ in $\F_{q^2}^*$, that is,
\begin{equation}\label{coset}
S=c_1\F_q^* \cup c_2\F_q^* \cup \cdots \cup c_m \F_q^*.
\end{equation}
Then the Cayley graph $X=\operatorname{Cay}(\F_{q^2}^+, S)$ is said to be a Peisert-type graph of type $(m,q)$. 
\end{defn}

The classical Erd\H{o}s-Ko-Rado theorem \cite{EKR61} concerns the classification of maximum intersecting families of $k$-element subsets of $\{1, 2, ..., n\}$: when $n \geq 2k+1$, all maximum families are canonically intersecting (also known as stars) in the sense that all sets in the family contain a fixed element. Unsurprisingly, in many extremal problems, the only extremal configurations are given by those obvious constructions. The book \cite{GM} by Godsil and Meagher consists of many modern algebraic approaches to proving various EKR-type results.

In this paper we study the structure of maximum cliques in Peisert-type graphs; equivalently, we study the interaction between the addition operation (coming from $C-C$) and the multiplication operation (coming from those $\F_q^*$-cosets forming $S$) over finite fields. From the perspective of EKR, Peisert-type graphs are nice objects to study as they admit obvious choices of maximum cliques. Indeed, one can read from the connection set $S$ that $c_1\F_q, c_2\F_q, \ldots, c_m\F_q$ are cliques in $X$ containing $0$. Moreover, Peisert-type graphs are based on cyclotomic constructions (due to Brouwer, Wilson, and Xiang~\cite{BWX}) so that they are strongly regular, which implies that $c_1\F_q, c_2\F_q, \ldots, c_m\F_q$ are maximum cliques because of the Delsarte bound (see \cite[Theorem 7]{AGLY22} for a different proof). Thus, translates of $c_1\F_q, c_2\F_q, \ldots, c_m\F_q$ can be regarded as \emph{canonical cliques} in $X$. Since there are no other obvious maximum cliques in $X$, in view of EKR-type results, we say \emph{the EKR theorem holds for $X$}, or equivalently, $X$ has \emph{the strict-EKR property}, provided that all maximum cliques in $X$ are canonical. 

The story of the EKR theorem in our setting begins with a celebrated result by Blokhuis~\cite{B84}, who showed that the only maximum clique (containing 0,1) in the Paley graph of order $q^2$ is given by the subfield $\F_q$. Recall that the {\em Paley graph of order $q^2$} is $\operatorname{Cay}(\F_{q^2}^+,(\F_{q^2}^*)^2)$, where $(\F_{q^2}^*)^2$ is the subset of squares in $\F_{q^2}^*$. One can easily show that the subset of squares can be decomposed as the union of $\F_q^*$-cosets, and verify that Blokhuis' result is equivalent to the EKR theorem for Paley graphs; see \cite[Section 5.9]{GM} for a related discussion. Later, Sziklai~\cite{Szi99} generalized the proof of Blokhuis, and showed that if $d \mid (q+1)$ and $d \geq 3$, then the \emph{$d$-Paley graph} $\operatorname{Cay}(\F_{q^2}^+,(\F_{q^2}^*)^d)$ has the strict-EKR property, in the same spirit. We refer to \cite[Section 2.1]{AY22} for more historical discussion. Recently, Mullin \cite{NM} studied the same question for the Peisert graph $\operatorname{Cay}(\F_{q^2}^+,(\F_{q^2}^*)^4 \cup g(\F_{q^2}^*)^4)$ (where $q \equiv 3 \pmod 4$ and $g$ is a primitive root of $\F_{q^2}$) and conjectured that the EKR theorem also holds. However, she remarked (see also \cite[Remark 2.19]{AY22}) that it seems the idea of Blokhuis and Sziklai relies on the subgroup structure of the connection set (which precisely corresponds to Paley graphs and $d$-Paley graphs) and thus fails to extend to Peisert graphs. When $d$ is even, one can also ask the same question for the $d$-th power Peisert graphs (see for example \cite[Definition 2.11]{AY22}), defined similarly as $d$-Paley graphs. 

The attempt to find a unified approach to prove the EKR theorem of these graphs eventually led to the definition of Peisert-type graphs in \cite{AY22}; indeed, the above four families of graphs are special Peisert-type graphs \cite[Lemma 2.10]{AY22}. In \cite[Theorem 1.3]{AY22}, Asgarli and Yip showed that the strict-EKR property holds for a Peisert-type graph $X$ of type $(m,q)$ (with $m \leq \frac{q+1}{2}$ and $q$ sufficiently large) provided the existence of a multiplicative character such that the character sum over each subset of the connection set of $X$ admits a small cancellation. When $X$ is a Paley graph, Peisert graph, or their generalizations, the choice of such a character is obvious; thus this criterion gives rise to a new proof of the result by Blokhuis and by Sziklai, as well as resolves the conjecture by Mullin (except for the case when $p$ is small compared to $\log_p q$). However, it is not clear how to apply such a criterion to
an abstractly defined Peisert-type graph. Furthermore, given a Peisert-type graph, it seems difficult to predict if it has the strict-EKR property by staring at its connection set. Indeed, one can systematically construct an infinite family of Peisert-type graphs for which the strict-EKR property fails \cite[Section 5]{AGLY22}. 

The stability of canonical cliques in Peisert-type graphs has also been studied. For simplicity, we say a Peisert-type graph $X$ with order $q^2$ has \emph{the property $\operatorname{ST}(k)$} if each clique in $X$ with size at least $q-k$ is contained in a canonical clique in $X$. From the definition, it is clear that the strict-EKR property is equivalent to the property $\operatorname{ST}(0)$. Thus, the property $\operatorname{ST}(k)$ (for $k \geq 1$) refines the strict-EKR property. One widely open conjecture, due to by Baker, Ebert, Hemmeter, and Woldar \cite{BEHW96}, states that the second largest maximal clique in the Paley graph with order $q^2$ has size $\frac{q+r(q)}{2}$, where $r(q)=1$ or $3$, depending on  $q $ modulo $4$. In other words, Paley graphs of order $q^2$ has the property $\operatorname{ST}(\frac{q-r(q)}{2}-1)$. To the best knowledge of the author, no partial progress has been made for this conjecture. For $d$-Paley graphs of order $q^2$ with $d \geq 3$, Sziklai \cite[Theorem 1.3]{Szi99}
 showed they have the property $\operatorname{ST}(c\sqrt{q})$ for some positive constant $c$ (depending on $d$); we refer to \cite[Corollary 4.4]{AY22} and \cite[Section 6]{GSY23} for some generalization and improvement on his results. However, all known results in this direction assume the edge density of the graph being strictly less than $1/2$. In general, such type of stability questions is widely open for Peisert-type graphs.

In \cite[Section 6]{AGLY22}, Asgarli, Goryainov, Lin, and Yip asked for a complete classification of Peisert-type graphs with the strict-EKR property. This question is potentially very challenging; indeed, it turns out to be a special case of a widely open problem regarding the strict-EKR property of block graphs of orthogonal arrays \cite[Problem 16.4.1]{GM}. In this paper, we give some partial answers to this question by estimating the number of Peisert-type graphs with the strict-EKR property. In other words, we study how likely the strict-EKR property holds for a ``random" Peisert-type graph. From the probabilistic viewpoint, it seems plausible that there should be a threshold in terms of $m$ and $q$ determining the prevalence (or not) of the strict-EKR property among Peisert-type graphs of type $(m,q)$. We remark that questions of a similar flavor have been studied previously; for example, Balogh, Bohman, and Mubayi \cite{BBM09} studied EKR-type results in random hypergraphs. 

Our first main result shows that the strict-EKR property is prevalent when the edge density of the graph is at most $\frac{1}{2}$. It is known that Peisert-type graphs with edge density $\frac{1}{2}$ are pseudo-Paley graphs \cite[Corollary 5]{AGLY22}, that is, graphs that share the same spectrum with some Paley graphs. Thus, our result shows that the EKR theorem for Paley graphs extends to almost all pseudo-Paley graphs of Peisert-type; moreover, we can say something stronger on the stability of canonical cliques. 

\begin{thm}\label{thmST1}
As $q \to \infty$, almost all Peisert-type graphs of type $(\frac{q+1}{2},q)$ have the property $\operatorname{ST}(\sqrt{q}/2)$. 
\end{thm}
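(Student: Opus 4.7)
The plan is a first moment argument. Let $\mathcal{F}$ be the family of Peisert-type graphs of type $((q+1)/2, q)$, so $|\mathcal{F}|=\binom{q+1}{(q+1)/2}$. Identifying $\F_{q^2}$ with $\AG(2,q)$, each $X\in\mathcal{F}$ corresponds to a choice $\mathcal{D}_X$ of $(q+1)/2$ directions out of the $q+1$ parallel classes of lines (the $\F_q^*$-cosets in $\F_{q^2}^*$). For $C\subset\F_{q^2}$, let $D(C)$ denote the number of distinct directions determined by the nonzero differences of $C$. Then $C$ is a clique in $X$ iff all directions of $C$ lie in $\mathcal{D}_X$, so for uniformly random $X\in\mathcal{F}$,
\[
\Pr_X[C \text{ is a clique in } X]=\binom{q+1-D(C)}{(q+1)/2-D(C)}\Big/\binom{q+1}{(q+1)/2}\le 2^{-D(C)},
\]
vanishing when $D(C)>(q+1)/2$. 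By Markov's inequality, it suffices to show $\sum_{k=0}^{\lfloor\sqrt{q}/2\rfloor}\mathbb{E}_X[N_k(X)]=o(1)$, where $N_k(X)$ counts non-canonical cliques in $X$ of size $q-k$.

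The key structural input is a \emph{line-slice bound}: if $C$ is a non-canonical clique of size $q-k$ in some $X\in\mathcal{F}$, then $|L\cap C|\le(q-1)/2$ for every affine line $L$ in $\AG(2,q)$. Indeed, pick $y\in C\setminus L$ (which exists by non-canonicity); the $|L\cap C|$ directions from $y$ to points of $L\cap C$ are pairwise distinct (since $y\notin L$) and none equals the direction of $L$, so together with the direction of $L$ itself (which lies in $\mathcal{D}_X$ whenever $|L\cap C|\ge 2$) one obtains $|L\cap C|+1$ distinct directions in $\mathcal{D}_X$, forcing $|L\cap C|\le(q-1)/2$. Translating so that $0\in C$ and decomposing $C=\{0\}\sqcup\bigsqcup_d A_d$ with $A_d=C\cap L_d^*$ for lines $L_d$ through $0$, let $t=|\{d:A_d\ne\emptyset\}|\ge 2$, so $\sum_d|A_d|=q-k-1$ and each $|A_d|\le(q-1)/2$. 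Identifying $L_d^*\cong\F_q^*$ via choices of generators, the cross-directions arising from pairs $A_d\times A_{d'}$ correspond bijectively to elements of the quotient set $A_d A_{d'}^{-1}\subset\F_q^*$, giving
\[
D(C)\ge t+\max_{d\ne d'}|A_d A_{d'}^{-1}|\ge t+\max_d|A_d|\ge t+\tfrac{q-k-1}{t}\ge 2\sqrt{q-k-1}
\]
by AM--GM.

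To bound $\sum_C 2^{-D(C)}$ (sum over non-canonical $C$ with $0\in C$ and $|C|=q-k$), parameterize by the type $(t;d_1,\ldots,d_t;A_{d_1},\ldots,A_{d_t})$. The naive count $\binom{q+1}{t}\prod_i\binom{q-1}{|A_{d_i}|}$ is too large, but the clique constraint $|A_{d_i} A_{d_j}^{-1}|\le(q+1)/2-t$ is extremely restrictive: by Kneser's theorem in the cyclic group $\F_q^*$, this forces each $A_{d_i}$ to be contained in a small number of cosets of a nontrivial subgroup $H\le\F_q^*$. For example, in the extremal case $t=2$, each $A_{d_i}$ must essentially lie within a single coset of the unique index-$2$ subgroup $(\F_q^*)^2$, and a direct Vandermonde calculation bounds the number of such pairs $(A_{d_1},A_{d_2})$ by $\binom{q-1}{k}=q^{O(k)}$. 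Enumerating over the divisor $|H|$ of $q-1$ and the cosets, and combining with the exponential weight $2^{-D(C)}\le 2^{-2\sqrt{q-k-1}}$, one shows $\sum_C 2^{-D(C)}=o(q^{-3/2})$ for each fixed $k$. Multiplying by the translation factor $q^2/(q-k)=O(q)$ and summing over the $O(\sqrt{q})$ values of $k$ yields $\mathbb{E}_X[\sum_k N_k(X)]=o(1)$ as required.

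The main obstacle is making the Kneser reduction uniform and quantitative across all $t\ge 2$ and all divisors $|H|$ of $q-1$: the admissible configurations depend sensitively on the arithmetic of $q-1$, and the exponential weight $2^{-D(C)}$ must dominate the count in every regime. A potentially cleaner alternative is to combine the character-sum criterion of~\cite{AY22} with Chernoff--Hoeffding concentration for hypergeometric sampling: for random $\mathcal{D}_X$, every nontrivial multiplicative character $\chi$ of $\F_{q^2}^*/\F_q^*$ satisfies $|\sum_{d\in\mathcal{D}_X}\chi(c_d)|=O(\sqrt{q\log q})$ with probability $1-O(q^{-2})$, and a quantitative stability refinement of~\cite[Theorem~1.3]{AY22} would then directly yield the property $\operatorname{ST}(\sqrt{q}/2)$.
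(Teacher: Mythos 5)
Your proposal takes a genuinely different route from the paper, but it has real gaps that you yourself acknowledge, so it does not constitute a proof.

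The paper proceeds in two steps, each resting on a strong structural input from the literature. First (Proposition~\ref{prop1}), it establishes that almost all Peisert-type graphs of type $(\frac{q+1}{2},q)$ have the strict-EKR property, and this counting is tractable only because Theorem~\ref{subspacethm} reduces the candidate non-canonical \emph{maximum} cliques to $\F_p$-subspaces of $\F_{q^2}$ --- a polynomial number $\le q^{2n}$, not the astronomical $\binom{q^2}{q}$. Second (Proposition~\ref{EKR->ST} together with Corollary~\ref{B_q}), it upgrades strict-EKR to $\operatorname{ST}(\sqrt{q}/2)$ using Sz\H{o}nyi's stability theorems for direction sets (Theorems~\ref{S1}, \ref{S2}), whose proofs rely on algebraic-geometry bounds (Hasse--Weil / St\"ohr--Voloch) applied to the R\'edei polynomial. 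Your plan bypasses both inputs and attempts a direct first-moment bound over \emph{all} subsets $C$ of size $q-k$, $k \le \sqrt{q}/2$, that could be non-canonical cliques, using $\Pr[C \text{ is a clique}] \le 2^{-D(C)}$ together with Kneser's theorem in $\F_q^*$ to constrain the structure of $C$. If this could be carried through it would be more elementary than the paper's argument and would not need Sz\H{o}nyi's machinery, which is appealing.

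However, the pivotal estimate $\sum_C 2^{-D(C)} = o(q^{-3/2})$ is asserted, not proven, and you flag this yourself: ``making the Kneser reduction uniform and quantitative across all $t \ge 2$ and all divisors $|H|$ of $q-1$'' is exactly the part of the argument that is missing. This is not a routine gap: a bare bound $D(C) \gtrsim \sqrt{q}$ yields $2^{-D(C)} \approx 2^{-\sqrt{q}}$, which is nowhere near enough to beat the number of subsets of size $q-k$; the entire weight of the argument falls on the unproven Kneser reduction, and only the extremal case $t=2$ is sketched (and even there the Vandermonde count is not actually executed). There is also a small arithmetic slip in the direction lower bound: from the $t$ line directions plus the cross-directions $A_d A_{d'}^{-1}$ for one pair $(d,d')$, you only get $D(C) \ge 2 + |A_d A_{d'}^{-1}|$ (the cross-directions cannot hit the two lines $L_d, L_{d'}$ but may coincide with other line directions $L_{d''}$), so combining with $D(C) \ge t$ gives $D(C) \gtrsim \sqrt{q-k-1}$, not $2\sqrt{q-k-1}$. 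Finally, the alternative sketched in your last paragraph leans on ``a quantitative stability refinement of \cite[Theorem 1.3]{AY22}'' that is not available in the literature and is not supplied here.
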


In many problems defined over finite fields, working over a prime field $\F_p$ is much easier than working over a general finite field. Essentially, this is due to the existence of subfield obstructions over a general finite field. The same situation occurs in our settings. When $p$ is a prime, it is known that all Peisert-type graphs of type $(\frac{p+1}{2},p)$ have the strict-EKR property \cite{AY22} (see also Theorem~\ref{subspacethm}). Our second main result refines Theorem~\ref{thmST1} when $q=p$ is a prime.

\begin{thm}\label{thmST2}
As $p \to \infty$, almost all Peisert-type graphs of type $(\lfloor \frac{2p-2}{3} \rfloor,p)$ have the strict-EKR property, and almost all Peisert-type graphs of type $(\frac{p+1}{2},p)$ have the property $\operatorname{ST}(p/20)$.
\end{thm}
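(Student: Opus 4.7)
My plan is to run a first-moment argument on the uniformly random $m$-subset $S$ of the $p+1$ cosets of $\F_p^*$ in $\F_{p^2}^*$, which I identify with points of $\bP^1(\F_p)$. For $C \subset \F_{p^2}$ write
$$D(C) = \{[v] \in \bP^1(\F_p) : v \in C-C, \, v \neq 0\}$$
for its projective direction set; then $C$ is a clique in the resulting Peisert-type graph iff $D(C) \subseteq S$, an event of probability
$$\frac{\binom{p+1-|D(C)|}{m-|D(C)|}}{\binom{p+1}{m}} = \frac{\binom{m}{|D(C)|}}{\binom{p+1}{|D(C)|}} \leq \left(\frac{m}{p+1}\right)^{|D(C)|}.$$
A ``bad'' clique witnesses failure of the stated property: for part (1), a non-canonical clique of size $p$; for part (2), a clique of size at least $\lceil 19p/20 \rceil$ not contained in any coset $t + c_i\F_p$. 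Writing $\alpha := m/(p+1)$, so $\alpha \leq 2/3$ in part (1) and $\alpha = 1/2$ in part (2), the union bound gives $\Pr[X \text{ is bad}] \leq \sum_{C \text{ bad}} \alpha^{|D(C)|}$, and it suffices to show this sum is $o(1)$.

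Two ingredients drive the estimate. First, a direction lower bound. For part (1), Blokhuis' direction theorem over $\AG(2,p)$ yields $|D(C)| \geq (p+3)/2$ for every non-canonical $p$-subset. For part (2), a bad $C$ of size $s = p - k$ with $k \leq p/20$ either extends (via Szőnyi's extension argument, when $k = O(\sqrt{p})$) to a non-canonical $p$-set with the same direction set, giving $|D(C)| \geq (p+3)/2$, or occurs in an explicit Megyesi-type construction built from multiplicative subgroups of $\F_p^*$, where $|D(C)| \geq (|C|+3)/2 = \Omega(p)$; in either case $|D(C)| = \Omega(p)$. Second, an enumerative bound $N_d \leq p^{O(1)}$ on the number of bad $C$ with $|D(C)| = d$: using the Rédei polynomial $R_C(X,Y) = \prod_{(a,b) \in C} (X + aY - b)$, each bad $C$ is constrained, up to the affine group $\mathrm{Aff}(2,p)$ of order $O(p^6)$, to lie in at most $p^{O(1)}$ structured families (the Megyesi construction and its variants).

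Combining, for part (1),
$$\sum_{C \text{ bad}} (2/3)^{|D(C)|} \leq p^{O(1)} \sum_{d \geq (p+3)/2} (2/3)^d = O\bigl(p^{O(1)} (2/3)^{p/2}\bigr) = o(1),$$
and analogously for part (2) with $1/2$ in place of $2/3$ and with an additional union bound over sub-maximum sizes $s \in [\lceil 19p/20 \rceil, p]$. The hard part will be establishing the enumerative bound $N_d \leq p^{O(1)}$ uniformly in $d$ and (for part (2)) in $s$: this rests on a careful Rédei polynomial classification of few-direction sets over the prime field $\F_p$ and its extension to sub-maximum cliques, in a regime where Szőnyi's extension argument is not directly applicable. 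The specific constant $1/20$ should emerge from the interplay between the exponential savings $\alpha^d$, the polynomial factor from the count, and the enumeration of competing bad structures at each sub-maximum size.
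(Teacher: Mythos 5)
Your high-level first-moment idea is in the right spirit, and for part (1) it can be made to work, but as written it leaves the crucial enumeration step open, and for part (2) the union-bound approach over bad sub-maximum cliques cannot succeed without the paper's key decoupling idea.

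For part (1), what you label ``the hard part'' — the bound $N_d \leq p^{O(1)}$ — is exactly where the substance lies, and a vague appeal to ``Rédei polynomial classification'' does not close it. The paper closes it by combining two more of R\'edei's successors: G\'acs's theorem says a non-collinear $p$-subset of $\AG(2,p)$ determining more than $(p+3)/2$ directions must determine at least $\lfloor(2p+1)/3\rfloor$ of them, which exceeds $m=\lfloor(2p-2)/3\rfloor$; so a bad clique must have $|D(\pi(C))|=(p+3)/2$ exactly, and then Lov\'asz--Schrijver identifies $\pi(C)$ up to the affine group with $\{(x,x^{(p+1)/2})\}$, giving $N_{(p+3)/2}=O(p^4)$ (or $O(p^6)$ with the naive affine count) and $N_d=0$ for all other $d$ below $m$. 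Without G\'acs you have no control over $N_d$ in the range $(p+3)/2<d\leq m$, and the sum $\sum_d N_d\,\alpha^d$ is not obviously $o(1)$.

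For part (2), the proposal has a more fundamental gap. You want a union bound over all bad cliques $C$ of size $s\in[\lceil 19p/20\rceil,p]$, but the number of such candidate $C\subset\F_{p^2}$ is $\binom{p^2}{s}=\exp(\Theta(p\log p))$, which swamps the exponential savings $(1/2)^{|D(C)|}=\exp(-\Theta(p))$ unless you have a genuinely polynomial enumeration of bad $C$ — and no such classification of sub-maximum few-direction sets is available (your own text concedes Sz\H{o}nyi's extension is ``not directly applicable'' in part of the range). Also, the extension threshold $k=O(\sqrt{p})$ you cite is the general-$q$ Hasse--Weil bound; for a prime field it improves to $k<(p+45)/20$ via St\"ohr--Voloch, which is precisely where the constant $1/20$ comes from, and your argument does not use this. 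The paper avoids the whole issue by decoupling: (a) almost all Peisert-type graphs of type $(\frac{p+1}{2},p)$ have the strict-EKR property, via a count over the at most $q^{2n}$ $\F_p$-subspaces of size $q$ that could be non-canonical maximum cliques (for $q=p$ this is even automatic since every such subspace is a line); and (b) for any fixed $X$ with the strict-EKR property and $\mathcal{D}(X)\notin\mathcal{B}_p$, \emph{every} clique of size $\geq p-p/20$ extends deterministically to a $p$-clique by Sz\H{o}nyi's stability theorem, which must then be canonical by strict-EKR. Only the at most $p^5$ graphs with $\mathcal{D}(X)\in\mathcal{B}_p$ (Corollary 2.8) are excluded. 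This sidesteps any need to enumerate or union-bound over bad sub-maximum cliques. Without this decoupling idea, the probabilistic calculation you set up cannot be completed.
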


On the other hand, we show that if the edge density of a Peisert-type graph is too large, then the strict-EKR property almost surely fails. 

\begin{thm}\label{thmnoEKR}
Let $n \geq 2$ with the largest proper divisor being $t$ and let $q=p^n$. As $p \to \infty$, almost all Peisert-type graphs of type $(q-o(p^t),q)$ do not have the strict-EKR property. 
\end{thm}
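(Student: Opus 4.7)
The plan is to exhibit a single, explicit non-canonical $\F_{p^t}$-subspace $V \subset \F_{q^2}$ of size $q$ that meets only $p^{n-t}+1$ of the $q+1$ cosets of $\F_q^*$ in $\F_{q^2}^*$, and then to show that for a uniformly random union of $m$ cosets forming $S$, the probability that all of these $p^{n-t}+1$ cosets lie in $S$ tends to $1$. This forces $V$ to be a clique in $X=\operatorname{Cay}(\F_{q^2}^+,S)$; since $|V|=q$ is the maximum clique size and $V$ is not an $\F_q$-line through $0$, the strict-EKR property must fail for $X$, yielding the theorem.

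For the construction, since $t\mid n$, the subfield $\F_{p^t}$ sits inside $\F_q$ and $\F_q$ has $\F_{p^t}$-dimension $n/t\ge 2$. Fix an $\F_{p^t}$-hyperplane $U\subset \F_q$ and any $u\in \F_{q^2}\setminus \F_q$, and set $V:=U\oplus \F_{p^t}\,u$. Then $V$ is an $\F_{p^t}$-subspace of $\F_{q^2}$ with $|V|=p^{n-t}\cdot p^t=q$. Since $V\cap \F_q=U\neq \F_q$ and $V$ fails to be closed under multiplication by any $\gamma\in \F_q\setminus \F_{p^t}$, $V$ cannot equal $c\F_q$ for any $c\in \F_{q^2}^*$, so $V$ is a non-canonical candidate of the right size.

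To count $\mu(V):=\#\{[c]\in \F_{q^2}^*/\F_q^*:V\cap c\F_q\ne\{0\}\}$, fix $c\notin \F_q$ and decompose $u=\alpha_c+\beta_c\,c$ in the $\F_q$-basis $\{1,c\}$ of $\F_{q^2}$ (so $\beta_c\ne 0$). A direct check shows that $w+\lambda u\in V$ lies in $c\F_q$ iff $w=-\lambda\alpha_c$; since $U$ is an $\F_{p^t}$-subspace, this can be solved for every $\lambda\in \F_{p^t}$ precisely when $\alpha_c\in U$, in which case $V\cap c\F_q=\F_{p^t}\beta_c\, c$. The map $\alpha\mapsto[u-\alpha]$ embeds $\F_q$ injectively into $\F_{q^2}^*/\F_q^*\setminus\{[1]\}$ (otherwise $u\in \F_q$), so exactly $|U|=p^{n-t}$ cosets $[c]\neq[1]$ satisfy $\alpha_c\in U$; together with $[1]$ (where $V\cap\F_q=U$) we obtain $\mu(V)=p^{n-t}+1$.

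Let $r:=(q+1)-m=o(p^t)+1$ denote the number of $\F_q^*$-cosets excluded from $S$. Since $V$ is an additive subgroup, $V$ is a clique in $X$ iff $V^*\subset S$, iff none of the $r$ excluded cosets lies among the $\mu(V)$ cosets met by $V$. For uniformly random $S$,
\[
\Pr\bigl[V\text{ is a clique in }X\bigr]\;=\;\frac{\binom{q+1-\mu(V)}{r}}{\binom{q+1}{r}}\;\geq\;\Bigl(1-\frac{\mu(V)}{q+2-r}\Bigr)^{\!r},
\]
and $r\,\mu(V)/(q+2-r)\leq o(p^t)\cdot(p^{n-t}+1)/(p^n-o(p^t))=o(1)$, so this probability tends to $1$ as $p\to\infty$. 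Hence almost every $S$ of type $(m,q)$ produces a graph in which $V$ is a non-canonical maximum clique. The main technical point is the coset count $\mu(V)=p^{n-t}+1$: the use of an $\F_{p^t}$-hyperplane is essential, since the naive $\F_p$-hyperplane analogue would give $\mu(V)=p^{n-1}+1$, which is too large to beat the threshold $r=o(p^t)$ once $t<n-1$; once the coset count is established, the rest is a routine first-moment estimate on the hypergeometric distribution over size-$m$ subsets of cosets.
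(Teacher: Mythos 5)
Your proof is correct and follows essentially the same strategy as the paper's: pick a non-canonical $\F_{p^t}$-subspace $V\subset\F_{q^2}$ of size $q$, bound the number of $\F_q^*$-cosets it meets, and run a first-moment/hypergeometric estimate. The only difference is that the paper takes an arbitrary such $V$ and bounds the coset count by $\frac{q-1}{p^t-1}=p^{n-t}+p^{n-2t}+\cdots+1$ (using that each nontrivial intersection $V\cap c\F_q$ is an $\F_{p^t}$-subspace of size $\geq p^t$), whereas you construct $V=U\oplus\F_{p^t}u$ explicitly and compute the exact count $\mu(V)=p^{n-t}+1$; your count is slightly tighter when $n>2t$, but both give the same $o(1)$ failure probability.
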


As suggested by numerical computations, the structure of
maximum cliques could be very complicated in Peisert-type graphs with the edge density strictly greater than $1/2$. This is probably why the strict-EKR property of these graphs has not been studied in the literature.

\medskip

\textbf{Structure of the paper.} 
In Section~\ref{sec: prelim}, we provide additional background and prove some preliminary results. In Section~\ref{sec: EKR}, we study the strict-EKR property of Peisert-type graphs. Finally, in Section~\ref{sec: ST}, we study the stability of canonical cliques. The proof of Theorem~\ref{thmST1} and Theorem~\ref{thmST2} will be presented in Section~\ref{sec: ST}, and the proof of Theorem~\ref{thmnoEKR} will be presented in Section~\ref{sec: EKR}.

\section{Preliminaries}\label{sec: prelim}

\subsection{EKR properties of Peisert-type graphs} In this subsection, we provide extra background on the known EKR properties of Peisert-type graphs.

\begin{thm}[Asgarli and Yip {\cite[Theorem 1.2]{AY22}}]  \label{subspacethm}
Let $X$ be a Peisert-type graph of type $(m,q)$ with $m \leq \frac{q+1}{2}$, where $q$ is a power of an odd prime $p$. Then each maximum clique in $X$ containing 0 is an $\F_p$-subspace of $\F_{q^2}$. 
\end{thm}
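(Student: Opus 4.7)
The plan is to encode the clique as a set of $q$ points in the affine plane $\AG(2,q)$ and invoke the celebrated direction theorem of Blokhuis, Ball, Brouwer, Storme, and Sz\H{o}nyi. First I would observe that since each canonical clique $c_i\F_q$ is a maximum clique of size $q$, any maximum clique $C$ containing $0$ has $|C| = q$, and the clique condition forces
$$C - C \subseteq \{0\} \cup \bigcup_{i=1}^m c_i \F_q^*.$$

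Next I would fix an $\F_q$-basis of $\F_{q^2}$ so as to identify $\F_{q^2}$ with $\AG(2,q)$. Under this identification, the direction determined by two distinct points $a, b \in C$ is exactly the $\F_q^*$-coset $(a-b)\F_q^* \in \F_{q^2}^*/\F_q^* \cong \PG(1,q)$. Since every nonzero element of $C - C$ lies in some $c_i \F_q^*$, the set $C$ determines at most $m \leq (q+1)/2$ directions in $\AG(2,q)$.

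Writing $q = p^n$, the direction theorem asserts that any set of $q$ points in $\AG(2,q)$ that determines $N$ directions with $N \leq (q+3)/2$ is, after translation to contain the origin, an $\F_{p^e}$-subspace of $\F_q^2$ for some divisor $e \mid n$ (the extremal subcase $e = n$ corresponding to an affine line). Applied to $C$ --- which contains $0$ and determines at most $(q+1)/2 < (q+3)/2$ directions --- this shows that $C$ is an $\F_{p^e}$-subspace of $\F_q^2 \cong \F_{q^2}$ for some $e \mid n$, and hence an $\F_p$-subspace of $\F_{q^2}$, as required.

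The main obstacle, and the technical heart of the argument, is the invocation of the direction theorem itself; this is a deep classical result whose proof I would not reproduce. The only remaining loose end is the degenerate case of a single direction: then $C - C$ is contained in a single coset $c_i\F_q$, which together with $0 \in C$ and $|C|=q$ forces $C = c_i \F_q$, an $\F_q$-subspace (and in particular an $\F_p$-subspace).
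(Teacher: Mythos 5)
Your proof is correct and matches the simpler geometric route the paper itself sketches in the remark following Proposition~\ref{correspondence}: view the maximum clique as a set of $q$ affine points determining at most $m \le (q+1)/2$ directions (via the coset-to-direction dictionary) and invoke the Ball--Blokhuis--Brouwer--Storme--Sz\H{o}nyi classification of such point sets. One small correction to your statement of that classification: it yields the $\F_{p^e}$-subspace conclusion only when $N \le (q+1)/2$, i.e.\ strictly below $(q+3)/2$ (at the boundary $N = (q+3)/2$ the Lov\'asz--Schrijver exceptional example $\{(x,x^{(q+1)/2}) : x \in \F_q\}$ appears, which is not in general a subspace, and the normalization requires an $\F_q$-linear change sending an undetermined direction to infinity, not merely a translation); but since $m \le (q+1)/2$ and $0 \in C$, your application stays safely inside the valid range and the argument goes through.
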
  

In particular, the above theorem immediately implies the strict-EKR property for each Peisert-type graph of type $(m,p)$ with $m \leq \frac{p+1}{2}$, which has been implicitly observed by Lov\'{a}sz and Schrijver \cite{LS83}. When $q$ is a proper prime power and the connection set of $X$ has a nice algebraic structure (which is the case for Paley graphs, Peisert graphs, and their generalizations), it turns out that Theorem~\ref{subspacethm} implies the strict-EKR property \cite[Theorem 1.3]{AY22}. 

The following theorem shows that all Peisert-type graphs with small edge density have the strict-EKR property. 

\begin{thm}[Asgarli, Goryainov, Lin, and Yip {\cite[Corollary 8]{AGLY22}}]\label{OAEKR}
If $q>(m-1)^2$, then all Peisert-type graphs of type $(m,q)$ have the strict-EKR property. 
\end{thm}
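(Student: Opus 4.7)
The plan is to cast the problem geometrically. Identifying $\F_{q^2}$ with the affine plane $\AG(2,q)$ via any choice of $\F_q$-basis, each coset $c_i \F_q$ becomes a one-dimensional $\F_q$-subspace through the origin, and the $m$ cosets $c_1\F_q,\ldots,c_m\F_q$ determine $m$ distinct ``directions'' in the plane. A maximum clique $C$ of size $q$ containing $0$ satisfies $C - C \subseteq S \cup \{0\}$, which translates into the statement that every secant of $C$ is parallel to one of these $m$ directions. The goal is to show $C$ itself lies on a single affine line in one of these directions, so that $C = c_i\F_q$ for some $i$ and is therefore canonical.

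Next, I would let $f = \max_{\ell} |C \cap \ell|$, where $\ell$ ranges over all affine lines whose direction is one of the $m$ chosen ones, and pick some $\ell_1$ achieving this maximum. If $f = q$, then $C = \ell_1$ and the statement holds, so I may assume $f < q$ and choose $P_0 \in C \setminus \ell_1$. For each of the $f$ points $P \in C \cap \ell_1$, the secant $P_0P$ has some direction $d(P)$ that is different from the direction of $\ell_1$, since $P_0 \notin \ell_1$. Moreover, the map $P \mapsto d(P)$ is injective: if two distinct points of $C \cap \ell_1$ had the same direction to $P_0$, they would both lie at the unique intersection of $\ell_1$ with the corresponding line through $P_0$. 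There are only $m-1$ admissible directions, which forces $f \le m-1$, and by maximality this gives the uniform bound $|C \cap \ell| \le m-1$ for every affine line $\ell$ in any of the $m$ directions.

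The last step is a direction-partitioned double count. For each direction $d_i$, the parallel lines in direction $d_i$ partition $C$ into parts of sizes $f_{i,j}$ summing to $q$, each of size at most $m-1$, and so the number of pairs of points of $C$ lying on a common line of direction $d_i$ is at most $q(m-2)/2$. Since every pair of points in $C$ belongs to exactly one direction,
\[
\binom{q}{2} \;=\; \sum_{i=1}^m \sum_j \binom{f_{i,j}}{2} \;\le\; \frac{mq(m-2)}{2},
\]
which rearranges to $q \le (m-1)^2$, contradicting the hypothesis $q > (m-1)^2$. The main obstacle I expect is the second step: converting the choice of an extremal line $\ell_1$ into the clean uniform bound $f \le m-1$ via the distinct-directions argument from an outside point. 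Once that is in hand, the concluding count is forced, and the initial geometric translation is essentially just a change of viewpoint.
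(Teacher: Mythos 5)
Your proof is correct, and it reproduces the standard combinatorial argument for the strict-EKR property of block graphs of orthogonal arrays, translated into the affine-plane language that the present paper uses elsewhere (Section~\ref{sec: correspondence}). The paper itself cites this theorem from \cite{AGLY22} without reproducing a proof; the proof there realizes a Peisert-type graph of type $(m,q)$ as the block graph of an orthogonal array $OA(m,q)$ and invokes the known EKR bound for such block graphs, which holds precisely when $n>(m-1)^2$. The combinatorial heart of that bound is exactly your two steps: the pencil argument from an outside point $P_0$ showing that a clique not contained in a single admissible line meets each admissible line in at most $m-1$ points, followed by the double count over the $m$ parallel classes giving $\binom{q}{2}\le m\cdot q(m-2)/2$, i.e.\ $q\le(m-1)^2$. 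Your version is thus not a genuinely different route but a self-contained unpacking of the cited result in geometric terms. Two facts you take for granted are worth flagging: that maximum cliques have size exactly $q$ (which the paper justifies via the Delsarte/Hoffman bound, with an alternate proof in \cite[Theorem 7]{AGLY22}), and that by vertex-transitivity it suffices to consider cliques through $0$. With those standard reductions in place, the argument is complete.
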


Theorem~\ref{OAEKR} is best possible if $q$ is a square; see the counterexamples constructed in \cite[Theorem 9]{AGLY22}. We refer to \cite[Corollary 4.1]{AY22} for a slightly stronger statement.

The following corollary is equivalent to the above theorem. We record it here as it has potential applications in number theory and additive combinatorics. The realization of this equivalence is also helpful in proving our main results.

\begin{cor}\label{subspace_covering}
Let $V \subset \F_{q^2}$ be an $\F_p$-subspace with size $q$. If $V$ is not an $\F_q$-subspace, then at least $\sqrt{q}+1$ many $\F_q^*$-cosets are needed to cover $V \setminus \{0\}$.
\end{cor}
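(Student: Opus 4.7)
The plan is to prove the contrapositive: if $V \setminus \{0\}$ can be covered by at most $\sqrt{q}$ cosets of $\F_q^*$, then $V$ must already be an $\F_q$-subspace. The idea is to construct from such a covering a Peisert-type graph in which $V$ automatically becomes a maximum clique, and then invoke Theorem~\ref{OAEKR} to conclude that $V$ is canonical.

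Concretely, assume $V \setminus \{0\} \subset c_1 \F_q^* \cup \cdots \cup c_m \F_q^*$, where the $c_i$ lie in distinct cosets of $\F_q^*$ and $1 \leq m \leq \sqrt{q}$. Since $-1 \in \F_q^*$, each individual coset $c_i \F_q^*$ is closed under negation, so the union $S := c_1 \F_q^* \cup \cdots \cup c_m \F_q^*$ satisfies $-S = S$. Together with $m \leq \sqrt{q} \leq q$, this means $X := \operatorname{Cay}(\F_{q^2}^+, S)$ is a bona fide Peisert-type graph of type $(m,q)$ in the sense of Definition~\ref{defn:peisert-type}. Because $V$ is an $\F_p$-subspace, $v_1 - v_2 \in V$ for all $v_1, v_2 \in V$, and every nonzero such difference lies in $V \setminus \{0\} \subset S$; hence $V$ is a clique of size $q$ in $X$. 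As noted in the introduction (via the Delsarte bound on strongly regular Peisert-type graphs), $q$ equals the clique number of $X$, so $V$ is a maximum clique containing $0$.

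Finally, $m \leq \sqrt{q}$ forces $(m-1)^2 < m^2 \leq q$, so Theorem~\ref{OAEKR} applies and every maximum clique in $X$ is canonical. A canonical maximum clique containing $0$ is one of the $\F_q$-lines $c_i \F_q$, which is visibly an $\F_q$-subspace of $\F_{q^2}$. Thus $V = c_i \F_q$ for some $i$, completing the proof. There is no real obstacle here: the content of the corollary is already packaged inside Theorem~\ref{OAEKR}, and the only point requiring attention is to select the $c_i$ in genuinely distinct cosets so that $m$ really counts the number of cosets used in the covering; once that is done, the translation between ``covering $V \setminus \{0\}$ by $\F_q^*$-cosets'' and ``$V$ being a clique in the associated Peisert-type graph'' is exactly what makes the two statements equivalent.
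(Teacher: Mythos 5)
Your proof is correct and is essentially the paper's own argument, just phrased in contrapositive form: both approaches package the covering of $V\setminus\{0\}$ into a connection set $S$, observe that $V$ becomes a (non-canonical) maximum clique in the resulting Peisert-type graph, and invoke Theorem~\ref{OAEKR}. The only cosmetic difference is that you explicitly spell out the verification that $S=-S$, that $X$ is a genuine Peisert-type graph, and that the Delsarte bound makes $V$ maximum, whereas the paper treats these as understood.
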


\begin{proof}
Suppose that the minimum number of $\F_q^*$-cosets needed to cover $V \setminus \{0\}$ is $m$; say $V \setminus \{0\} \subset S:=\cup_{i=1}^m c_i\F_q^*$. Then $V$ is a clique in $X=\operatorname{Cay}(\F_{q^2}^+,S)$ since $V-V=V \subset S \cup \{0\}$. It follows that $V$ is a non-canonical clique in the Peisert-type graph $X$ that is of type $(m,q)$, and thus $X$ fails to satisfy the strict-EKR property. It follows from Theorem~\ref{OAEKR} that $m \geq \sqrt{q}+1$.
\end{proof}

\subsection{Connection with finite geometry}
In this subsection, we recall some celebrated results from the theory of directions and then discuss their connection with Peisert-type graphs. 

We begin with some standard terminologies. Let $\AG(2,q)$ and $\PG(1,q)$ denote the affine plane and the projective line over $\F_q$, respectively. 
Let $U$ be a subset of $\AG(2,q)$; the \emph{set of directions determined} by $U$ is defined to be
$$
D (U):=  \left\{ [a-c: b-d]:  (a,b), (c,d) \in U,\, (a,b) \neq (c,d) \right\} \subset \PG(1,q).
$$

Let $D \subset \PG(1,q) $ be a set of directions. We say that a subset $U$ of $\AG(2,q)$ is a \emph{$D$-set} if $D(U)=D$, and $U$ is said to be \emph{maximal with respect to $D$} if $D(U) \subset D$ and $D(U \cup \{v\}) \not \subset D$ for any $v \in \AG(2,q) \setminus U$.

The theory of directions has been developed by multiple
authors, notably R\'edei \cite{R73} and Sz\H{o}nyi \cite{S99}. It is of particular interest to estimate $|D(U)|$; the following theorem summarizes the best-known results on the size of $D(U)$ when $q=p$ is a prime.
\begin{thm}\label{direction}
Let $U$ be a subset of $\AG(2,p)$ with $|U|=p$.
\begin{itemize}
    \item (R\'{e}dei and Megyesi \cite{R73}) If the points in $U$ are not all collinear, then $U$ determines at least $\frac{p+3}{2}$ directions.
    \item (Lov\'{a}sz and Schrijver \cite{LS83}) If $U$ determines exactly $(p+3) / 2$ directions, then $U$ is affinely equivalent to the set $\{(x,x^{(p+1)/2}): x \in \F_p\}$ . 
    \item (G\'{a}cs \cite{G03}) If $U$ determines more than $\frac{p+3}{2}$ directions, then it determines at least  $\lfloor \frac{2p+1}{3} \rfloor$ directions.
\end{itemize}
\end{thm}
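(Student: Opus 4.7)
The plan is to use the classical R\'edei polynomial approach, which handles all three parts through the same algebraic framework. Given $U = \{(a_i, b_i) : 1 \le i \le p\} \subset \AG(2,p)$, I would introduce the R\'edei polynomial
$$H(X, Y) = \prod_{i=1}^{p} (X - a_i Y - b_i) \in \F_p[X, Y]$$
and expand it as $H(X, Y) = X^p + \sum_{j=0}^{p-1} h_j(Y)\, X^j$, noting that each $h_j$ has degree at most $p - j$ in $Y$. The fundamental observation driving everything is that $m \in \F_p$ is a non-direction of $U$ if and only if the values $a_i m + b_i$ are pairwise distinct, equivalently iff $H(X, m) = X^p - X$. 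Comparing coefficients, each non-direction $m$ is a root of the polynomials $h_0, h_2, h_3, \ldots, h_{p-1}$ and of $h_1 + 1$. Writing $N = p + 1 - |D(U)|$ for the number of non-directions, $h_j$ must therefore vanish identically whenever $N > p - j$, since its degree is too small to admit that many roots.

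For the R\'edei--Megyesi bound, I would assume $|D(U)| \le (p+1)/2$ (so $N \ge (p+1)/2$) and use the forced vanishing of $h_j$ for all $j \ge (p+1)/2$ with $j \ne p$ to reduce $H(X,Y)-X^p+X$ to a lacunary polynomial in $X$ whose only nontrivial terms sit in low degree. R\'edei's divisibility argument, sharpened by Megyesi, then forces all $a_i$ or all $b_i$ to coincide, contradicting non-collinearity. For the Lov\'asz--Schrijver characterization, the equality case $N=(p-1)/2$ leaves just enough freedom that the resulting lacunary polynomial can be identified, after matching coefficients against the R\'edei polynomial of the candidate set $\{(x,x^{(p+1)/2}): x \in \F_p\}$, with the R\'edei polynomial of an affine image of this candidate; this forces $U$ to be affinely equivalent to the extremal configuration.

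Part (iii), due to G\'acs, will be the main obstacle. The degree-based vanishing criterion $N > p - j$ is no longer sharp once $|D(U)|$ exceeds $(p+3)/2$ only slightly, so ruling out every value of $|D(U)|$ strictly between $(p+3)/2$ and $\lfloor (2p+1)/3 \rfloor$ requires substantially more. The plan would be to combine the vanishing conditions on the $h_j$ with a finer study of the factorization of $H(X,Y)-(X^p-X)$ in $\F_p(Y)[X]$, extracting a factor that corresponds to a proper subset of $U$ carrying a nontrivial additive or multiplicative substructure. Closing the argument then amounts to showing that no such substructure is available in the prime field $\F_p$; this is the technical heart of G\'acs's theorem and is precisely where the primality of $p$ is essential, in contrast to $q=p^n$ with $n\ge 2$, where subfield obstructions genuinely destroy the analogous ``gap'' statement.
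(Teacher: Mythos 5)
The paper does not prove this theorem; it is stated purely as a compilation of cited background results (R\'edei--Megyesi, Lov\'asz--Schrijver, G\'acs), so there is no paper argument to compare against. Your sketch must therefore be judged on its own.

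For parts (i) and (ii) you have correctly identified the R\'edei polynomial machinery: a non-direction $m$ forces $H(X,m)=X^p-X$, so each finite non-direction is a common root of the coefficient polynomials $h_j$ (and of $h_1+1$), which combined with the degree bounds $\deg h_j \le p-j$ forces lacunarity; R\'edei's divisibility theory then yields (i), and analysing the equality case yields (ii). This is indeed the standard route. One bookkeeping caveat: after the usual normalization in which $U$ is the graph of a function, the direction at infinity is automatically a non-direction and is not a root of any $h_j$, so the relevant count of finite non-directions is $p-|D(U)|$, not $p+1-|D(U)|$; the threshold ``$N>p-j$ forces $h_j\equiv 0$'' should be run with that number. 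You also leave the actual R\'edei--Megyesi divisibility step and the Lov\'asz--Schrijver coefficient-matching entirely implicit, but that is a matter of level of detail rather than a wrong idea.

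Part (iii) is a genuine gap. You acknowledge it is the hard part, but what you offer --- factor $H(X,Y)-(X^p-X)$ over $\F_p(Y)$, extract a subconfiguration carrying additive or multiplicative structure, and rule such structure out over the prime field --- is a placeholder, not an argument, and it does not describe how G\'acs's proof actually closes. Once $|D(U)|$ exceeds $(p+3)/2$, the crude criterion ``too few roots forces $h_j\equiv 0$'' supplies far too little, and the content of G\'acs's theorem lies in a finer second-order analysis attached to the R\'edei polynomial (root-counting for an auxiliary polynomial built from the $h_j$'s and a careful comparison of degrees), which your outline neither names nor replaces. As written, (iii) is unproved.
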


The following theorem is analogous to Theorem~\ref{direction} for $q=p^2$.

\begin{thm}[G\'{a}cs, Lov\'{a}sz, and Sz\H{o}nyi \cite{GLS07}]\label{p^2}
Let $U$ be a subset of $\AG(2,p^2)$ with $|U|=p^2$. If $|D(U)| \geq \frac{p^2+3}{2}$, then either $U$ is affinely equivalent to the set $\{(x,x^{(p^2+1)/2}): x \in \F_{p^2}\}$, or $|D(U)| \geq \frac{p^2+p}{2}+1$.
\end{thm}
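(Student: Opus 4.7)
The plan is to employ the R\'edei polynomial machinery, which is the standard tool for direction problems. After an affine change of coordinates we may assume the vertical direction $(0:1)$ is not determined by $U$, so $U = \{(x, f(x)) : x \in \F_{p^2}\}$ is the graph of a function $f : \F_{p^2} \to \F_{p^2}$; set
\[
R(X, Y) = \prod_{x \in \F_{p^2}}\bigl(X + xY - f(x)\bigr) \in \F_{p^2}[X, Y].
\]
A finite slope $y_0 \in \F_{p^2}$ is a non-direction of $U$ precisely when the monic polynomial $R(X, y_0)$ of degree $p^2$ is fully reducible with distinct roots in $\F_{p^2}$, which forces $R(X, y_0) = X^{p^2} - X$. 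Expanding
\[
R(X, Y) = X^{p^2} + \sum_{i=0}^{p^2-1} h_i(Y)\, X^i, \qquad \deg h_i \le p^2 - i,
\]
the $N$ finite non-directions are common zeros of $h_1(Y) + 1$ and of every other $h_i(Y)$. Since $\deg h_i \le p^2 - i$, one forces $h_i \equiv 0$ whenever $p^2 - i < N$, so $R(X, Y)$ is lacunary: only ``low'' monomials $X^i$ survive alongside $X^{p^2}$.

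The next step invokes the theory of fully reducible lacunary polynomials over $\F_{p^2}$. Under the hypothesis $|D(U)| \ge (p^2+3)/2$, the lacunary shape of $R(X, Y)$ combined with the requirement that $R(X, y_0) = X^{p^2} - X$ for every non-direction $y_0$ yields strong divisibility information on the surviving exponents of $R(X, Y)$. One deduces that these exponents must be compatible with a nontrivial subfield of $\F_{p^2}$, and since the only proper subfield is $\F_p$, the function $f$ must, after a further affine transformation, be $\F_p$-linear.

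The final step is to classify $\F_p$-linear graphs $U$ of size $p^2$ whose direction set lies in the gap interval $\bigl[(p^2+3)/2,\ (p^2+p)/2\bigr]$. Mirroring the argument in the $q=p$ case recorded in Theorem~\ref{direction}, one analyzes the action of $f$ on parallel classes of lines using the additive structure over $\F_p$. This shows that either $f$ is $\F_{p^2}$-affine (so $|D(U)| = 1$, excluded), or $f$ is affinely equivalent to $x \mapsto x^{(p^2+1)/2}$ (which realizes exactly $|D(U)| = (p^2+3)/2$), or $|D(U)|$ already exceeds $(p^2+p)/2$, completing the dichotomy.

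The main obstacle is the subfield reduction step. Over the prime field $\F_p$ the lacunary polynomial analysis is nearly complete after one application of Blokhuis-type inequalities and produces the Lov\'asz-Schrijver dichotomy directly, because there are no intermediate subfields to worry about. Over $\F_{p^2}$, however, the presence of $\F_p$ creates many $\F_p$-linear but non-$\F_{p^2}$-linear graphs that determine surprisingly few directions; controlling precisely which exponent patterns for $R(X, Y)$ can coexist with the full reducibility of every $R(X, y_0)$, and then showing that they all reduce after an affine change to the square map $x \mapsto x^{(p^2+1)/2}$, constitutes the technical heart of \cite{GLS07}.
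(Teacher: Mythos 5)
The paper does not actually prove this theorem: it is quoted verbatim from G\'acs, Lov\'asz, and Sz\H{o}nyi \cite{GLS07} and used as a black box, so there is no internal proof to compare your proposal against. Your outline gestures at the right machinery (R\'edei polynomial, lacunary polynomial analysis), which is indeed what \cite{GLS07} uses, but as a proof it has a concrete gap and a concrete error.

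The error is the reduction step ``the function $f$ must, after a further affine transformation, be $\F_p$-linear.'' This cannot be right. The extremal configuration $\{(x, x^{(p^2+1)/2}) : x \in \F_{p^2}\}$ is \emph{not} the graph of an $\F_p$-linear map (the map $x \mapsto x^{(p^2+1)/2}$ multiplies $x$ by the quadratic character $\chi(x)=\pm 1$, which is certainly not additive), so it would be excluded by your reduction rather than produced by it. Conversely, the $\F_p$-linear but non-$\F_{p^2}$-linear point sets you intend to classify in your ``final step'' all live in the other branch of the dichotomy: by the Blokhuis--Ball--Brouwer--Storme--Sz\H{o}nyi divisibility phenomenon, such a set determines at least $\frac{p^2+p}{2}+1$ directions, so the interval $\bigl[\frac{p^2+3}{2},\ \frac{p^2+p}{2}\bigr]$ contains no $\F_p$-linear examples at all. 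Thus the classification step you describe is vacuous and does not reach the actual extremal set. The real content of \cite{GLS07} is to show, via a careful study of how $R(X,Y)$ restricted to non-directions can simultaneously equal $X^{p^2}-X$, that in this narrow window the polynomial must essentially factor as $X\bigl(X^{(p^2-1)/2}\pm g(Y)\bigr)$ for suitable $g$, forcing the quadratic-residue structure rather than a subfield structure; your sketch replaces this, the technical heart of the argument, with an appeal to a subfield reduction that does not apply here. The remaining steps (R\'edei setup, lacunarity from the degree bounds $\deg h_i \le p^2-i$) are standard and fine, but by your own admission they stop short of the substance of the theorem.
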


Proving direction results over $\AG(2,q)$ for general prime powers $q$ appears to be much harder. Indeed the subfield obstructions would provide a major barrier in proving extensions of Theorem~\ref{direction}; see \cite{Ball03, BBBSS} for a celebrated result of Blokhuis, Ball, Brouwer, Storme, and Sz{\H{o}}nyi. It is known that results similar to Theorem~\ref{direction} can be very helpful in studying cliques in Paley graphs and more generally Peisert-type graphs. In the literature, geometric properties of Peisert-type graphs have been explored; see \cite{AGLY22, AY22, B84, GSY23, LS83, Szi99}. Such geometrical interpretations of Peisert-type graphs turn out to play an important role in proving their EKR-type properties; we refer to Section~\ref{sec: correspondence} for a different connection, which is crucial in the proof of our stability result. 

There are also stability results in the theory of directions. Here we list two results by Sz\H{o}nyi \cite{S96}, whose proofs use tools from algebraic geometry. 

\begin{thm}[Sz\H{o}nyi \cite{S96}]\label{S1}
Let $U$ be a $D$-set of $\AG(2, q)$ consisting of $q-n$ points with $1 \leq n \leq \sqrt{q}/2$. If $|D|<(q+1)/ 2$, then $U$ can be extended to a $D$-set $V$ with $|V|=q$. Moreover, when $q$ is a prime $p$, the assumption $n \leq \sqrt{q}/2$ can be weakened to $n<(p+45)/20$.
\end{thm}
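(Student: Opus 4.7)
My plan is to use Sz\H{o}nyi's R\'edei polynomial method, adapted to handle the gap of $n$ missing points. Write $U = \{(a_i, b_i) : 1 \leq i \leq q-n\} \subset \AG(2,q)$ and introduce the R\'edei polynomial
$$H(X,Y) = \prod_{i=1}^{q-n} (X - b_i + a_i Y) \in \F_q[X,Y],$$
which has $X$-degree $q-n$. The key observation is that for every $y \in \F_q$ whose associated direction does not lie in $D$, no two points of $U$ are collinear on a line of the corresponding slope, so $H(X,y)$ is squarefree of degree $q-n$ and therefore divides $X^q - X$ in $\F_q[X]$. Consequently there is a unique complement $g_y(X) \in \F_q[X]$ of degree $n$ satisfying $H(X,y)\,g_y(X) = X^q - X$. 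Since the number of such non-directions $y$ exceeds $(q+1)/2$ by hypothesis, Lagrange interpolation packages the $g_y$ into a single bivariate polynomial $G(X,Y) \in \F_q[X,Y]$ of $X$-degree $n$ whose $Y$-degree is controlled in terms of $n$ and $|D|$.

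The next step is to show that $G$ splits as a product of $n$ linear factors of the form $X + aY - b$ with $(a,b) \in \AG(2,q)$; each such factor supplies one point that can be adjoined to $U$ without enlarging $D(U)$, and the collection of all $n$ of them constitutes the desired extension $V$. Toward this, consider the plane curve $\mathcal{C}: G(X,Y) = 0$ over $\overline{\F_q}$. For each non-direction $y$, the specialization $G(X,y) = g_y(X)$ has $n$ distinct roots in $\F_q$, namely the elements of $\F_q$ complementary to the roots of $H(X,y)$. Hence $\mathcal{C}$ carries at least roughly $n(q+1)/2$ affine $\F_q$-rational points. Comparing with the Hasse--Weil bound
$$|\mathcal{C}(\F_q)| \leq q + 1 + (\deg G - 1)(\deg G - 2)\sqrt{q}$$
applied to each absolutely irreducible component of $\mathcal{C}$ forces $G$ to factor into $\F_q$-linear pieces as soon as $n \leq \sqrt{q}/2$. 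A short direct argument using the explicit form of $g_y$ then verifies each such factor has the shape $X + aY - b$.

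The main obstacle is the prime-field refinement from $n \leq \sqrt{p}/2$ to $n < (p+45)/20$. Here the Hasse--Weil bound is too weak, and one must instead invoke the St\"ohr--Voloch bound on the number of $\F_p$-rational points of an algebraic curve, which exploits the Frobenius order sequence to beat Hasse--Weil when the characteristic is large compared to the degree of the curve. Optimizing a St\"ohr--Voloch-type inequality against the combinatorial lower bound on $|\mathcal{C}(\F_p)|$, together with an inductive peeling-off of linear components as they are identified, produces the specific threshold $(p+45)/20$. A final bookkeeping step checks that adjoining the $n$ new points $(a,b)$ indexed by the linear factors of $G$ preserves the direction set $D$ and produces no duplicates, which completes the extension.
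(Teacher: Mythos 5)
The paper does not re-derive this result: its ``proof'' consists entirely of the citation to \cite[Theorem 4]{S96}, together with the observation that for prime $q$ one replaces Hasse--Weil by St\"ohr--Voloch as indicated in \cite[Remark 3]{S96} and \cite[Theorem 5.1]{S99}. So your proposal is not competing with an argument in the paper --- it is an attempt to reconstruct Sz\H{o}nyi's proof, which the paper simply imports. The overall skeleton you give is the right one: form the R\'edei polynomial $H(X,Y)$, observe that $H(X,y)\mid X^q-X$ for non-determined $y$, build a complementary bivariate polynomial $G$ of $X$-degree $n$, count rational points on $G=0$, and compare with a Weil-type bound (Hasse--Weil in general, St\"ohr--Voloch for prime $q$) to force $G$ to split into $\F_q$-linear factors, each supplying an added point.

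There is one genuine gap in your sketch: the step ``Lagrange interpolation packages the $g_y$ into a single bivariate polynomial $G(X,Y)$ of $X$-degree $n$ whose $Y$-degree is controlled in terms of $n$ and $|D|$'' does not work as stated. Interpolating across more than $(q+1)/2$ values of $y$ yields $Y$-degree on the order of $q$, not $n$, and nothing in interpolation alone forces the degree to collapse. What actually gives the degree control in Sz\H{o}nyi's argument is algebraic, not interpolative: one writes $H(X,Y)=X^{q-n}+h_1(Y)X^{q-n-1}+\cdots+h_{q-n}(Y)$ with $\deg_Y h_j\leq j$, performs formal division of $X^q-X$ by $H$ in $\F_q(Y)[X]$ (or solves the triangular system for the coefficients of $G$), and the degree bounds on the $h_j$ propagate to degree bounds on the coefficients of $G$; the hypothesis on the number of non-determined directions is then used to kill the remainder term and to supply the point count on $G=0$. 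You also need to be a little more careful when applying Hasse--Weil/St\"ohr--Voloch componentwise, peeling off linear factors as they are found and re-estimating, which you gesture at but would need to carry out to get the precise thresholds $\sqrt{q}/2$ and $(p+45)/20$. If your goal is a self-contained proof, this construction of $G$ is the piece that must be repaired; otherwise, the paper's route --- cite \cite{S96} directly --- is the intended one.
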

\begin{proof}
This is essentially \cite[Theorem 4]{S96}. When $q=p$ is a prime, one can use the St\"{o}hr-Voloch bound (instead of the Hasse-Weil bound) to get an improved upper bound on the number of rational points of a curve so that \cite[Theorem 4]{S96} can be refined; see \cite[Remark 3]{S96} and also \cite[Theorem 5.1]{S99}.
\end{proof}

\begin{thm}[Sz\H{o}nyi \cite{S96}]\label{S2}
Let $D \subset \PG(1,q)$ with $|D|=\frac{q+1}{2}$. Suppose that there is a $D$-set $U \subset \AG(2,q)$ such that $U$ is maximal with respect to $D$ and $|U|=q-n$ with $1\leq n \leq \sqrt{q}/2$. Then $\PG(1,q) \setminus D$ is the projection of an irreducible conic (with the form $X^2+aY^2+bZ^2+cXY+dYZ+eZX=0$ defined in the projective plane) onto the $YZ$ projective line. Moreover, when $q$ is a prime $p$, the assumption $n \leq \sqrt{q}/2$ can be weakened to $n<(p+45)/20$.
\end{thm}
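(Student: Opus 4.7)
The plan is to combine the R\'{e}dei polynomial machinery with an algebraic-geometric count of points on an auxiliary curve, and then upgrade the count using the St\"{o}hr--Voloch bound in the prime case. Set $N := \PG(1,q) \setminus D$, so $|N| = (q+1)/2$, and write $U = \{(a_i,b_i) : 1 \leq i \leq q-n\}$. Introduce the R\'{e}dei polynomial
\[
H(X,Y) = \prod_{i=1}^{q-n}\bigl(X - b_i + a_i Y\bigr) \in \F_q[X,Y],
\]
which is monic of degree $q-n$ in $X$. For each $m \in N$ the values $b_i - a_i m$ are pairwise distinct in $\F_q$ (otherwise $m$ would be a determined direction), so $H(X,m)$ is squarefree and divides $X^q - X$; writing $X^q - X = H(X,m) F_m(X)$ produces a monic $F_m(X) \in \F_q[X]$ of degree $n$ whose roots are exactly the $n$ ``missing values'' in the direction $m$.

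Next I would interpolate the $F_m(X)$ into a single bivariate $F(X,Y) \in \F_q[X,Y]$ by performing polynomial division of $X^q - X$ by $H(X,Y)$ in $\F_q[Y][X]$ (possible since $H$ is monic in $X$). The remainder vanishes at $Y=m$ for every $m \in N$, and its $Y$-degree is controlled by the structure of $H$. A careful bookkeeping, using the maximality of $U$ with respect to $D$ to prevent degenerate cancellations, shows that $F(X,Y)$ has degree $n$ in $X$ and degree $O(n)$ in $Y$, and that the zero set of $F$ in $\F_q^2$ contains at least $n|N| = n(q+1)/2$ rational points of the form $(s,m)$ with $m \in N$ and $s$ a missing value at $m$.

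The heart of the argument is the algebraic geometry of the curve $F(X,Y) = 0$. Decompose $F$ into absolutely irreducible components over $\overline{\F_q}$, of degrees $d_1,\ldots,d_k$ with $\sum d_i = O(n)$. The Hasse--Weil bound gives at most $q + 1 + (d_i-1)(d_i-2)\sqrt{q}$ affine $\F_q$-rational points on the $i$-th component. Comparing with the lower bound $n(q+1)/2$ under the hypothesis $n \leq \sqrt{q}/2$ forces at least one component to have degree at most $2$. One then rules out degree-$1$ components: a linear family of zeros would correspond to a point that could be added to $U$ while preserving $D(U) \subset D$, contradicting the maximality of $U$. Hence some component is an absolutely irreducible conic in $\PG(2,\overline{\F_q})$, defined over $\F_q$, and tracking how its projection onto the $YZ$-line recovers the non-determined directions shows that $\PG(1,q) \setminus D$ is exactly this projection, yielding the stated normal form $X^2 + aY^2 + bZ^2 + cXY + dYZ + eZX = 0$.

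For the prime-field refinement one replaces the Hasse--Weil bound by the St\"{o}hr--Voloch bound, which for an absolutely irreducible plane curve of degree $d$ over $\F_p$ gives an upper bound of roughly $\tfrac12 d(p + d - 1)$ on its number of $\F_p$-rational points, much sharper than Hasse--Weil for small $d$ and large $p$. Threading this improved bound through the count in Step~3 extends the permissible range of $n$ from $\sqrt{p}/2$ to $(p+45)/20$, paralleling the prime-case refinement in Theorem~\ref{S1}. The main obstacle is the bivariate bookkeeping: obtaining the precise degree bound on $F(X,Y)$ and excluding ``trivial'' low-degree factors (lines, Frobenius-conjugate translates, components forced by the decomposition of $X^q - X$) requires using the maximality of $U$ together with the sharp size constraint $|D| = (q+1)/2$. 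Once this is in hand, promoting an absolutely irreducible degree-$2$ component to the geometric statement that $\PG(1,q) \setminus D$ is a conic projection is comparatively formal, since a plane conic is determined up to finitely many choices by $(q+1)/2$ of its projected points.
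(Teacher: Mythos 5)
Your proposal and the paper's proof are doing fundamentally different things. The paper does \emph{not} re-derive Theorem~\ref{S2}: it cites Sz\H{o}nyi's Proposition~6 in \cite{S96} and then records two specific adaptations that must be made, namely (i) Sz\H{o}nyi's original statement implicitly assumes the vertical direction $\infty=[0:1]$ lies in $D$, so the statement must be adjusted when $\infty\notin D$, and (ii) the prime-field refinement $n<(p+45)/20$ is not stated in Proposition~6 itself but must be extracted from Remark~3 of \cite{S96} via the St\"ohr--Voloch bound. You instead attempt a from-scratch reconstruction of Sz\H{o}nyi's internal argument. The skeleton you lay out --- form the R\'edei polynomial $H(X,Y)$, observe that $H(X,m)\mid X^q-X$ for every non-determined direction $m$, interpolate a bivariate quotient $F(X,Y)$, count $\F_q$-points on the curve $F=0$ coming from the $n$ missing values at each $m\in N$, and then invoke Hasse--Weil or St\"ohr--Voloch to locate a low-degree component --- is indeed the standard machinery and is in the right spirit.

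However, the hard steps are asserted rather than carried out, and at least one of them is not as routine as you suggest. The claim that ``comparing with the lower bound $n(q+1)/2$\ldots forces at least one component to have degree at most $2$'' does not follow from naively summing the Hasse--Weil bounds over components: with $k$ absolutely irreducible components of degrees $d_i$ satisfying $\sum d_i\leq n$, the crude upper bound $k(q+1)+\sqrt{q}\sum(d_i-1)(d_i-2)$ is already of order $n(q+1)/3 + n^2\sqrt{q}$, which does not fall below $n(q+1)/2$ throughout the range $n\leq\sqrt{q}/2$. Sz\H{o}nyi's actual argument tracks the components and uses the maximality of $U$ and the exact value $|D|=(q+1)/2$ in a considerably more careful way; ``careful bookkeeping'' and ``comparatively formal'' are stand-ins for the real content. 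In addition, the degree control $\deg_Y F\leq n$ (coming from the fact that the coefficient of $X^{n-j}$ in $F$ has $Y$-degree at most $j$) is a nontrivial inductive fact you invoke but do not prove, the normalisation giving $X^2$ coefficient exactly $1$ in the stated conic form is not derived, and --- most importantly for matching the paper --- your slope-based R\'edei parametrisation silently places $\infty$ inside $D$, which is precisely the hypothesis the paper flags as needing to be removed when adapting Sz\H{o}nyi's Proposition~6 to the form stated here. Your sketch does not address the case $\infty\notin D$ at all.
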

\begin{proof}
This is essentially \cite[Proposition 6]{S96}. Note that in the original statement of \cite[Proposition 6]{S96}, it was implicitly assumed that the infinity direction $\infty:=[0:1]$ is in $D$ (which was assumed at the beginning of the proof in \cite[Theorem 4]{S96}); in general, one needs to slightly modify the statement. The form of the conic is explicit from the proof of \cite[Proposition 6]{S96}.

When $q=p$ is a prime, although it is not explicitly stated in \cite{S96}, one can derive the same conclusion using \cite[Remark 3]{S96} (see also the proof of Theorem~\ref{S1}). 
\end{proof}

We will call a subset $D \subset \PG(1,q)$ \emph{bad} if it satisfies the assumptions of Theorem~\ref{S2}. We define $\mathcal{B}_q$ to be the collection of bad direction sets. The following two corollaries are consequences of the above two theorems.

\begin{cor}\label{B_q}
The collection $\mathcal{B}_q$ of bad direction sets in $\PG(1,q)$ has size at most $q^5$.
\end{cor}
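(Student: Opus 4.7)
The plan is to invoke Theorem~\ref{S2} directly and count the parametrizing data for the conics it provides. By definition, every $D \in \mathcal{B}_q$ satisfies the hypothesis of Theorem~\ref{S2}, so $\PG(1,q) \setminus D$ must equal the projection onto the $YZ$ projective line of some irreducible conic of the form
\[
X^2 + aY^2 + bZ^2 + cXY + dYZ + eZX = 0
\]
for some $(a,b,c,d,e) \in \F_q^5$.

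The key observation is that the map $D \mapsto \PG(1,q) \setminus D$ is a bijection on subsets of $\PG(1,q)$, so bounding $|\mathcal{B}_q|$ is equivalent to bounding the number of distinct subsets of $\PG(1,q)$ arising as projections of conics in this special normal form. I would then define a map from $\F_q^5$ to $2^{\PG(1,q)}$ sending $(a,b,c,d,e)$ to the projection onto the $YZ$ line of the (possibly reducible) curve cut out by the displayed equation, and observe that by Theorem~\ref{S2} every complement of a bad set lies in the image of this map. Since the domain has cardinality $q^5$, this yields $|\mathcal{B}_q| \leq q^5$ immediately, without any need to worry about whether different tuples $(a,b,c,d,e)$ give the same projection (overcounting only helps for an upper bound) or about the irreducibility condition (restricting to irreducible conics can only decrease the count).

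There is essentially no main obstacle here: the entire content of the corollary is packaged inside Theorem~\ref{S2}, and the corollary simply records the crude count of the five free parameters $a, b, c, d, e$ (the coefficient of $X^2$ being normalized to $1$). The only small point to be careful about is that the coefficient of $X^2$ is indeed fixed at $1$ in the stated form of the conic, so that the parameter space is $\F_q^5$ rather than $\F_q^6$ or a projective space of pairs; this is explicit in the statement of Theorem~\ref{S2}.
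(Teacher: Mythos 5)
Your argument is correct and matches the paper's proof: both invoke Theorem~\ref{S2}, note that every bad direction set (up to complementation) arises as the $YZ$-projection of a conic in the stated normal form with five free coefficients $a,b,c,d,e \in \F_q$, and conclude the crude bound $q^5$. The extra remarks about the complementation bijection and overcounting are just explicit versions of steps the paper leaves implicit.
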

\begin{proof}
This follows from Theorem~\ref{S2}. Note that each $D \in \mathcal{B}_q$ arises from the projection of a conic with the form $X^2+aY^2+bZ^2+cXY+dYZ+eZX=0$, with $a,b,c,d,e \in \F_q$. Since the number of such conics is at most $q^5$ and each conic with the above form gives rise to at most one bad direction set, the statement follows. 
\end{proof}

\begin{rem}
One can get a better upper bound on $|\mathcal{B}_q|$ using the transitive property of projective general linear group $\PGL_2(\F_q)$, but it is not needed for our applications.
\end{rem}

\begin{cor}\label{stable}
Let $U$ be a $D$-set of $\AG(2, q)$ consisting of $q-n$ points with $n \leq \sqrt{q}/2$. If $|D|\leq (q+1)/ 2$ and $D \notin \mathcal{B}_q$, then $U$ can be extended to a $D$-set $V$ with $|V|=q$. Moreover, when $q=p$ is a prime, the assumption $n \leq \sqrt{q}/2$ can be weakened to $n<(p+45)/20$.
\end{cor}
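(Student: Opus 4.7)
The plan is to deduce the corollary from the two Sz\H{o}nyi theorems stated just above, splitting into two cases according to whether $|D|$ is strictly less than $(q+1)/2$ or exactly $(q+1)/2$.

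First I would handle the easy case $|D| < (q+1)/2$: here Theorem~\ref{S1} applies verbatim to $U$ (using the given bound on $n$, which in the prime case improves to $n < (p+45)/20$), and produces the desired $D$-set $V$ of size $q$ containing $U$. No appeal to $\mathcal{B}_q$ is needed in this case.

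Next I would treat the case $|D| = (q+1)/2$. The strategy is to enlarge $U$ greedily: starting from $U$, keep adjoining points of $\AG(2,q) \setminus U$ as long as doing so does not introduce any new direction outside $D$. This process terminates at some $U' \supseteq U$ that is maximal with respect to $D$ in the sense of the definition, with $D(U') \subseteq D$. Since $D = D(U) \subseteq D(U')$, in fact $D(U') = D$, so $U'$ is genuinely a $D$-set. Write $|U'| = q - n'$. Because $U \subseteq U'$, we have $n' \leq n \leq \sqrt{q}/2$ (respectively $n' \leq n < (p+45)/20$ in the prime case). If $n' = 0$ then $V = U'$ is the required extension and we are done. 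Otherwise $1 \leq n' \leq \sqrt{q}/2$ and Theorem~\ref{S2} applies to $(U', D)$, forcing $D \in \mathcal{B}_q$ and contradicting the hypothesis $D \notin \mathcal{B}_q$. Hence $n' = 0$.

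I do not expect any real obstacle here; the corollary is a packaging of Theorems~\ref{S1} and~\ref{S2} through the $\mathcal{B}_q$ notation. The only point that requires a line of care is verifying that the maximal extension $U'$ satisfies $D(U') = D$ (and not merely $D(U') \subseteq D$), which uses $D(U) = D$; this guarantees that Theorem~\ref{S2} can indeed be invoked with the same direction set $D$. The prime-field refinement requires nothing new beyond using the sharper bounds already recorded in the statements of Theorems~\ref{S1} and~\ref{S2}.
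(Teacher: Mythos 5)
Your argument is correct and follows the same two-case structure as the paper: Theorem~\ref{S1} handles $|D|<(q+1)/2$, and Theorem~\ref{S2} together with $D\notin\mathcal{B}_q$ handles $|D|=(q+1)/2$. The only cosmetic difference is that the paper extends one point at a time, re-invoking Theorem~\ref{S2} at each step to rule out maximality, whereas you jump to a maximal extension $U'$ in one go and apply Theorem~\ref{S2} once; your version implicitly assumes $|U'|\leq q$ (so that $n'\geq 0$), which follows because any set of more than $q$ points in $\AG(2,q)$ determines all $q+1$ directions, contradicting $|D|=(q+1)/2$ --- this is worth saying explicitly, but it is not a genuine gap.
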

\begin{proof}
When $|D|<(q+1)/2$, this follows from Theorem~\ref{S1}.

Next assume that $|D|=(q+1)/2$ and $D \notin \mathcal{B}_q$. Let $U$ be a $D$-set with the above assumption. Then Theorem~\ref{S2} implies that $U$ is not maximal with respect to $D$, that is, $U$ can be extended to a $D$-set $U'$ with $|U'|=|U|+1$, provided that $|U|<q$. We can then apply the same reasoning to $U'$ since $U'$ is still a $D$-set with $D \notin \mathcal{B}_q$. Thus, $U'$ can be extended to a $D$-set $U''$ with $|U''|=|U'|+1$, provided that $|U'|<q$. Applying the same argument repeatedly yields a $D$-set $V$ with $|V|=q$.
\end{proof}

\section{Prevalence of the strict-EKR property}\label{sec: EKR}

\begin{prop}\label{prop1}
As $q \to \infty$, almost all Peisert-type graphs of type $(\frac{q+1}{2},q)$ have the strict-EKR property. In particular, we have the EKR theorem for almost all pseudo-Paley graphs of Peisert-type.
\end{prop}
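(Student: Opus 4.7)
The plan is a union bound over potential non-canonical maximum cliques. By Theorem~\ref{subspacethm} and the vertex-transitivity of Cayley graphs, if a Peisert-type graph $X = \operatorname{Cay}(\F_{q^2}^+, S)$ of type $((q+1)/2, q)$ fails the strict-EKR property, then it admits a non-canonical maximum clique $V$ containing $0$, and $V$ must be an $\F_p$-subspace of $\F_{q^2}$ of size $q$ that is not an $\F_q$-subspace, with $V \setminus \{0\} \subset S$. Writing $q = p^n$, we may restrict to $n \geq 2$: when $n = 1$, every $\F_p$-subspace of size $p = q$ is one-dimensional, hence of the form $v\F_q$, which is a canonical clique. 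Letting $D(V) \subset \PG(1, q)$ denote the set of $\F_q^*$-cosets that intersect $V \setminus \{0\}$, Corollary~\ref{subspace_covering} yields $|D(V)| \geq \sqrt{q}+1$.

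Next I would count pairs $(V, S)$. For a fixed $V$, the number of choices of $S$ (a union of $(q+1)/2$ cosets of $\F_q^*$) containing $V \setminus \{0\}$ equals $\binom{q+1-|D(V)|}{(q+1)/2 - |D(V)|}$, and since
$$
\frac{\binom{q+1-k}{(q+1)/2-k}}{\binom{q+1}{(q+1)/2}} = \prod_{i=0}^{k-1}\frac{(q+1)/2-i}{q+1-i} \leq 2^{-k},
$$
setting $k = |D(V)|$ shows that at most a $2^{-\sqrt{q}-1}$ fraction of Peisert-type graphs of type $((q+1)/2, q)$ contains a given such $V$. A union bound over all $V$, using the crude bound that the number of $\F_p$-subspaces of $\F_{q^2} \cong \F_p^{2n}$ of $\F_p$-dimension $n$ is the Gaussian binomial $\binom{2n}{n}_p \leq 2^n p^{n^2}$, gives
$$
\frac{\#\{\text{bad graphs}\}}{\binom{q+1}{(q+1)/2}} \leq 2^n p^{n^2} \cdot 2^{-\sqrt{q}-1}.
$$

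The right-hand side equals $2^{\,n + n^2 \log_2 p - \sqrt{q} - 1}$, and $\sqrt{q} = p^{n/2}$ dominates $n^2 \log_2 p$ in both asymptotic regimes relevant here ($p \to \infty$ with $n \geq 2$ fixed, and $n \to \infty$ with $p$ fixed), so the exponent tends to $-\infty$ and the proposition follows. The main obstacle is securing a lower bound on $|D(V)|$ strong enough to beat the $\sim p^{n^2}$ count of $\F_p$-subspaces; this is precisely the content of Corollary~\ref{subspace_covering}, which is a reformulation of the Peisert-type EKR theorem for small edge density (Theorem~\ref{OAEKR}). The rest of the argument is a routine counting exercise.
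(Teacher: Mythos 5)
Your proposal is correct and follows essentially the same route as the paper: a union bound over non-canonical maximum cliques, restricting to $\F_p$-subspaces via Theorem~\ref{subspacethm}, applying Corollary~\ref{subspace_covering} to force at least $\sqrt{q}+1$ prescribed cosets, counting subspaces by the Grassmannian, and computing the same binomial ratio $\leq 2^{-k}$. The only (immaterial) differences are that you use the slightly sharper bound $\binom{2n}{n}_p \leq 2^n p^{n^2}$ rather than the paper's cruder $q^{2n}=p^{2n^2}$, you dispose of $n=1$ by hand, and the paper wraps up the asymptotics uniformly via $2n\log q \leq 2(\log q)^2$ rather than splitting into the two regimes you describe.
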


\begin{proof}
Let $q=p^n$. We need to estimate the number of graphs that fail to have the strict-EKR property. Since Cayley graphs are vertex-transitive, it suffices to consider maximum cliques containing $0$. Thus, it is equivalent to estimating the number of graphs admitting at least one non-canonical maximum clique containing $0$. 

By Theorem~\ref{subspacethm}, all non-canonical maximum cliques containing $0$ are $\F_p$-subspaces. The number of $\F_p$-subspaces of size $q$ in $\F_{q^2}$ is given by the Grassmannian
$$
\# \operatorname{Gr}(n, 2n)(\F_{p}) = \frac{(p^{2n}-1)(p^{2n}-p)\cdots (p^{2n}-p^{n-1})}{(p^{n}-1)(p^{n}-p)\cdots (p^{n}-p^{n-1})} \leq p^{2n^2}=q^{2n}.
$$

Let $V$ be an $\F_p$-subspace of size $q$ in $\F_{q^2}$, such that $V$ does not have the subfield structure in the sense that $V$ is not an $\F_q$-subspace. By Corollary~\ref{subspace_covering}, we need at least $\sqrt{q}+1$ many $\F_q^*$-cosets to cover $V\setminus \{0\}$. Let $X=\operatorname{Cay}(\F_{q^2}^+,S)$ be a Peisert-type graph of type $(\frac{q+1}{2},q)$, such that $V$ is a (maximum) clique in $X$. Then $V=V-V \subset S \cup \{0\}$. It follows that $S$ must be the union of $\frac{q+1}{2}$ many $\F_q^*$-cosets, with at least $\sqrt{q}+1$ of them having been prescribed. It follows that the number of such graphs $X$ is at most
$$
\binom{q+1-(\sqrt{q}+1)}{\frac{q+1}{2}-(\sqrt{q}+1)},
$$
which accounts for the number of graphs that fails to satisfy the strict-EKR property due to the appearance of the non-canonical maximum clique $V$.

To conclude, the density of graphs that fails to satisfy the strict-EKR property is at most 
\begin{align*}
\frac{q^{2n} \cdot \binom{q+1-(\sqrt{q}+1)}{\frac{q+1}{2}-(\sqrt{q}+1)}}{\binom{q+1}{\frac{q+1}{2}}}
&=q^{2n} \cdot \prod_{j=0}^{\sqrt{q}} \frac{\frac{q+1}{2}-j}{q+1-j}\leq \frac{q^{2n}}{2^{\sqrt{q}}}\\
&=\exp(2n \log q-\sqrt{q} \log 2)\leq \exp (2 (\log q)^2- \sqrt{q}\log 2),    
\end{align*}
which tends to $0$ as $q \to \infty$. This completes the proof.
\end{proof}

The proof of Theorem~\ref{thmnoEKR} is similar to the proof of Proposition~\ref{prop1}, but we instead consider the ``most efficient" non-canonical maximum clique in terms of the number of $\F_q^*$-cosets needed to cover it. 

\begin{proof}[Proof of Theorem~\ref{thmnoEKR}]
Let $V \subset \F_{q^2}$ be an $\F_{p^t}$-subspace of size $q$ such that $V$ is not an $\F_q$-subspace. Note that the intersection between $V$ and an $\F_q^*$-coset (together with 0) is an $\F_{p^t}$-subspace. Thus, the intersection between $V$ and each $\F_q^*$-coset has size $0$ or at least $p^t-1$. It follows that at most $\frac{q-1}{p^t-1}$ many $\F_q^*$-cosets are needed to cover $V\setminus \{0\}$. 

We consider the family of Peisert-type graphs of type $(m,q)$. Let $X=\operatorname{Cay}(\F_{q^2}^+,S)$ be a Peisert-type graph of type $(m,q)$, such that $V$ is a (maximum) clique in $X$. Note that $V$ is a non-canonical clique in $X$ provided that $S$ is the union of $m$ many $\F_q^*$-cosets, with at most $\frac{q-1}{p^t-1}$ of them having been prescribed in order to cover $V \setminus \{0\}$. It follows that the density of Peisert-type graphs of type $(m,q)$ without the strict-EKR property (due to $V$) is at least
\begin{align*}
\frac{\binom{q+1-(q-1)/(p^t-1)}{m-(q-1)/(p^t-1)}}{\binom{q+1}{m}}
&=\prod_{j=0}^{(q-1)/(p^t-1)-1} \frac{m-j}{q+1-j} \geq \bigg(\frac{m-(q-1)/(p^t-1)+1}{q+1-(q-1)/(p^t-1)+1}\bigg)^{(q-1)/(p^t-1)} \\
&=\bigg(1-\frac{q+1-m}{q+2-(q-1)/(p^t-1)}\bigg)^{(q-1)/(p^t-1)}\\
&\geq 1-\frac{q-1}{p^t-1} \cdot \frac{q+1-m}{q+2-(q-1)/(p^t-1)},
\end{align*}
where we used Bernoulli's inequality for the last step. If $q-m=o(p^t)$, then the above lower bound approaches to $1$ as $p \to \infty$. This shows that almost surely the strict-EKR property fails to hold under the given assumptions.
\end{proof}

The following corollary follows from Theorem~\ref{thmnoEKR} immediately.

\begin{cor}
As $q \to \infty$ with $q$ being a square, almost all Peisert-type graphs of type $(q-o(\sqrt{q}),q)$ do not have the strict-EKR property. 
\end{cor}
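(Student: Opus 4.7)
The plan is to derive the corollary as a direct specialization of Theorem~\ref{thmnoEKR}, with essentially no new ideas beyond matching parameters. Write $q = p^n$ for some odd prime $p$; the hypothesis that $q$ is a square, combined with $p$ odd, forces $n$ to be even, hence $n \geq 2$. The first step is to identify the parameter $t$ appearing in Theorem~\ref{thmnoEKR}: the largest proper divisor of an even integer $n$ is obtained by dividing out its smallest prime factor, namely $2$, so $t = n/2$ and consequently $p^t = p^{n/2} = \sqrt{q}$. Under this identification, the hypothesis ``type $(q - o(p^t), q)$'' in Theorem~\ref{thmnoEKR} translates verbatim to ``type $(q - o(\sqrt{q}), q)$'', which is exactly the hypothesis of the corollary.

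Having made this parameter identification, I would simply invoke Theorem~\ref{thmnoEKR} to conclude. The only book\-keeping point worth a brief comment is that the corollary is stated in the regime $q \to \infty$, while Theorem~\ref{thmnoEKR} is phrased with $p \to \infty$; however, inspecting the density estimate that appears in the proof of Theorem~\ref{thmnoEKR}, the relevant quantity is $\frac{q-1}{p^t - 1} \cdot \frac{q+1-m}{q+2-(q-1)/(p^t-1)}$, and after substituting $p^t = \sqrt{q}$ and $q+1-m = o(\sqrt{q})$ this becomes $\sqrt{q} \cdot o(\sqrt{q})/q = o(1)$, which tends to $0$ whenever $q \to \infty$ regardless of whether the growth is driven by $p$ or by $n$. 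No real obstacle arises: the mathematical substance is already contained in Theorem~\ref{thmnoEKR}, and the corollary is obtained by a clean parameter chase together with the elementary number-theoretic fact that $t = n/2$ for even $n$.
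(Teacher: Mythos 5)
Your proposal is correct and takes essentially the same approach as the paper, which simply states that the corollary "follows from Theorem~\ref{thmnoEKR} immediately." Since $p$ is odd and $q=p^n$ is a square, $n$ must be even, so the largest proper divisor is $t=n/2$ and $p^t=\sqrt{q}$, which is exactly the identification you make; your additional remark verifying that the density bound in the proof of Theorem~\ref{thmnoEKR} depends only on $p^t=\sqrt{q}\to\infty$ (and hence works under $q\to\infty$ regardless of how $p$ and $n$ vary) is a careful bookkeeping check that the paper leaves implicit, but it reflects the same underlying argument.
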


\section{Stability of canonical cliques}\label{sec: ST}
\subsection{An explicit correspondence}\label{sec: correspondence}
Let $q$ be a fixed odd prime power. We establish an explicit connection between the structure of maximum cliques of Peisert-type graphs and the direction sets determined by subsets $U \subset \AG(2,q)$ with $|U|=q$. The observation is based on the idea of realizing a Peisert-type graph projectively (as opposed to affinely, which was the idea in the previous works \cite{AGLY22, AY22}). We start by picking a fixed $v \in \F_{q^2} \setminus \F_q$ so that $\{1,v\}$ forms a basis 
of $\F_{q^2}$ over $\F_q$. This allows us to identify $\F_{q^2}$ and $\AG(2,q)$ via the embedding $\pi \colon \F_{q^2}\to \AG(2,q)$, where 
$$
\pi(a+bv)=(a,b), \quad \forall a,b \in \F_q.
$$
We also define the map $\sigma\colon \AG(2,q)\setminus \{(0,0)\} \to \PG(1,q)$ such that
$$
\sigma((a,b))=[a:b], \quad \forall (a,b) \in \AG(2,q)\setminus \{(0,0)\}.
$$

Let $X$ be a Peisert-type graph $X=\operatorname{Cay}(\F_{q^2}^+,S)$ of type $(m,q)$. We define \emph{the direction set determined by $X$} to be $\mathcal{D}(X):=\sigma(\pi(S))$. Since $S$ is the union of $m$ many $\F_q^*$-cosets in $\F_{q^2}^*$, it follows that $\mathcal{D}(X)$ is a subset of $\PG(1,q)$ with size $m$. The following proposition describes the explicit connection that will be very helpful in establishing our main results.

\begin{prop}\label{correspondence}
Let $X=\operatorname{Cay}(\F_{q^2}^+,S)$ be a Peisert-type graph  and let $C \subset \F_{q^2}$. Then we have the following correspondences:
\begin{itemize}
    \item $C$ is a clique in $X$ if and only if $D(\pi(C)) \subset \mathcal{D}(X)$, equivalently, $\pi(C)$ is a $D$-set for some $D \subset \mathcal{D}(X)$.
    \item If $|C|=q$, then $C$ is a canonical clique in $X$ if and only $|D(\pi(C))|=1$ and $D(\pi(C)) \subset \mathcal{D}(X)$. 
\end{itemize}
\end{prop}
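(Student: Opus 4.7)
The plan is to view $\pi \colon \F_{q^2} \to \AG(2,q)$ as an $\F_q$-linear bijection: it preserves differences, and it sends each coset $c_i\F_q^*$ appearing in $S$ to the punctured line $\pi(c_i)\F_q^*$ through the origin in $\AG(2,q)$. Applying $\sigma$ then collapses each such line to a single point of $\PG(1,q)$, so $\mathcal{D}(X) = \sigma(\pi(S))$ is a set of exactly $m$ directions. The key pointwise observation, which I would record at the outset, is that a nonzero element $z \in \F_{q^2}$ lies in $S$ if and only if $\sigma(\pi(z)) \in \mathcal{D}(X)$.

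For the first bullet, I would simply unpack the clique condition $C - C \subset S \cup \{0\}$. Using $\F_q$-linearity, $\pi(x-y) = \pi(x) - \pi(y)$ for all $x, y \in C$, so by the key observation, $x - y \in S$ is equivalent to $\sigma(\pi(x) - \pi(y)) \in \mathcal{D}(X)$. Ranging over all distinct pairs in $C$ gives $D(\pi(C)) \subset \mathcal{D}(X)$, which is the first formulation; the equivalent phrasing that $\pi(C)$ is a $D$-set for some $D \subset \mathcal{D}(X)$ is just the statement that $D(\pi(C))$ is some subset of $\mathcal{D}(X)$.

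For the second bullet, assume $|C| = q$. If $C$ is canonical, then $C = t + c\F_q$ for some $t \in \F_{q^2}$ and some $c$ with $c\F_q^* \subset S$; applying $\pi$ shows $\pi(C)$ is an affine line in $\AG(2,q)$, which determines the unique direction $\sigma(\pi(c)) \in \mathcal{D}(X)$, so $|D(\pi(C))|=1$ and $D(\pi(C)) \subset \mathcal{D}(X)$. Conversely, if $|D(\pi(C))|=1$, then every pair of distinct points of $\pi(C)$ determines the same direction, so $\pi(C)$ is collinear; since $|\pi(C)|=q$ equals the size of an affine line, $\pi(C)$ is a full affine line of the form $\pi(t)+\pi(c)\F_q$, and pulling back yields $C = t + c\F_q$. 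The hypothesis $D(\pi(C)) \subset \mathcal{D}(X)$ then forces $c\F_q^* \subset S$, making $C$ canonical. The whole proposition is essentially a translation of definitions through the $\F_q$-linear isomorphism $\pi$, so there is no real obstacle; the one step that warrants explicit mention is the cardinality argument in the converse of the second bullet, where $|\pi(C)|=q$ is used crucially to upgrade collinearity to being a full affine line rather than a proper subset of one.
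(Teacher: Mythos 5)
Your proof is correct and takes essentially the same approach as the paper's: translate the clique condition through the $\F_q$-linear identification $\pi$ and the projection $\sigma$. You are in fact a bit more careful than the paper in the converse direction of the second bullet, where the paper dismisses the verification that $C=\pi^{-1}(\pi(C))$ is canonical as ``easy''; you correctly note that the collinearity plus $|\pi(C)|=q$ yields a full affine line, and that the hypothesis $D(\pi(C))\subset\mathcal{D}(X)$ is what forces $c\F_q^*\subset S$ so that the line is actually a clique.
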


\begin{proof}
Let $C$ be a clique. Let $x,y \in C$ with $x \neq y$. Let $\pi(x)=(a,b)$ and $\pi(y)=(c,d)$. Since $C$ is a clique, it follows that $x-y=(a-c)+(b-d)v \in S$. Therefore, the direction 
$$
[a-c:b-d]=\sigma((a-c,b-d))=\sigma(\pi(x-y)) \in \sigma(\pi(S))=\mathcal{D}(X).
$$
This shows that $D(\pi(C)) \subset \mathcal{D}(X)$. Conversely, it is also easy to verify that $D(\pi(C)) \subset \mathcal{D}(X)$ implies that $C$ is a clique.  

If $C$ is a canonical clique, then $C-C=\alpha\F_q$ for some $\alpha \in \F_{q^2}^*$, so 
$$
D(\pi(C))=\sigma(\pi((C-C) \setminus \{0\}))=\sigma(\pi(\alpha))
$$
is a singleton set. Conversely, if $|D(\pi(C))|=1$, then all points in $\pi(C)$ are collinear, and it is easy to verify that $C=\pi^{-1}(\pi(C))$ is a canonical clique.
\end{proof}

\begin{rem}
Proposition~\ref{correspondence}, together with \cite[Theorem 3.2]{AY22}, immediately imply Theorem~\ref{subspacethm}. This new proof is much simpler than the original proof in \cite[Theorem 1.2]{AY22}. 
\end{rem}

\subsection{Proof of Theorem~\ref{thmST1} and Theorem~\ref{thmST2}}

In Proposition~\ref{prop1}, we have shown that almost all Peisert-type graphs of type $(\frac{q+1}{2},q)$ have the strict-EKR property. The next proposition shows that the strict-EKR property can be strengthened to the property $\operatorname{ST}(\sqrt{q}/2)$ in the generic case.

\begin{prop}\label{EKR->ST}
Let $X$ be a Peisert-type graph of type $(\frac{q+1}{2},q)$ with the strict-EKR property. If $\mathcal{D}(X) \notin \mathcal{B}_q$, then $X$ has the property $\operatorname{ST}(\sqrt{q}/2)$. Moreover, if $q=p$ is a prime, then $X$ has the property $\operatorname{ST}(p/20)$.
\end{prop}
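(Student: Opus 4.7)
The plan is to combine the explicit geometric correspondence of Proposition~\ref{correspondence} with the stability results from the theory of directions (Theorem~\ref{S1}, Theorem~\ref{S2}, Corollary~\ref{stable}), and then use the strict-EKR hypothesis to upgrade the extended configuration into a canonical clique containing the original one. Let $C$ be a clique in $X$ with $|C|=q-n$, where $1\leq n\leq \sqrt{q}/2$ (if $n=0$, strict-EKR already gives the conclusion). Set $U:=\pi(C)\subset \AG(2,q)$, so $|U|=q-n$, and let $D:=D(U)$. By the first part of Proposition~\ref{correspondence}, the clique condition on $C$ translates into $D\subset \mathcal{D}(X)$, and in particular $|D|\leq |\mathcal{D}(X)|=(q+1)/2$.

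Next, I would split into cases based on $|D|$. If $|D|<(q+1)/2$, Theorem~\ref{S1} immediately yields a $D$-set $V\subset \AG(2,q)$ with $|V|=q$ extending $U$. If $|D|=(q+1)/2$, then the inclusion $D\subset \mathcal{D}(X)$ together with equality of sizes forces $D=\mathcal{D}(X)$; by hypothesis $\mathcal{D}(X)\notin \mathcal{B}_q$, so $D\notin \mathcal{B}_q$, and Corollary~\ref{stable} (whose proof in this regime invokes Theorem~\ref{S2}) gives the same conclusion. Either way, $U$ extends to a $D$-set $V$ of size $q$ with $D(V)\subset \mathcal{D}(X)$.

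Finally, pulling this back through $\pi$, the set $\pi^{-1}(V)$ is a clique of size $q$ in $X$ by the converse direction of Proposition~\ref{correspondence}, hence a maximum clique. The strict-EKR hypothesis then forces $\pi^{-1}(V)$ to be a canonical clique, and by construction it contains $C=\pi^{-1}(U)$. This gives the property $\operatorname{ST}(\sqrt{q}/2)$. For the prime case $q=p$, the very same argument goes through verbatim, except that in both Theorem~\ref{S1} and Corollary~\ref{stable} we invoke the sharper threshold $n<(p+45)/20$ (which subsumes $n\leq p/20$), yielding the stronger property $\operatorname{ST}(p/20)$.

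The main obstacle, and the reason one cannot avoid the hypothesis $\mathcal{D}(X)\notin \mathcal{B}_q$, is precisely the borderline regime $|D|=(q+1)/2$: Sz\H{o}nyi's stability result fails for exactly the bad direction sets, where maximal but non-extendable $D$-sets of size $q-n$ genuinely exist (arising from projections of conics). Without this exceptional set being excluded, the correspondence would allow $C$ to sit inside a non-extendable configuration, and we would lose control of whether $U$ can be completed to a full $D$-set. Once $\mathcal{D}(X)$ is assumed to avoid $\mathcal{B}_q$, the rest of the argument is essentially bookkeeping through the dictionary of Proposition~\ref{correspondence}.
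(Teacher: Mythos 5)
Your proof is correct and follows essentially the same route as the paper: translate the clique $C$ to a $D$-set $U=\pi(C)$ via Proposition~\ref{correspondence}, invoke the direction-set stability result (Corollary~\ref{stable}) to extend $U$ to a $D$-set $V$ of size $q$, pull back to a maximum clique $\pi^{-1}(V)\supset C$, and finish with the strict-EKR hypothesis. The only difference is cosmetic: you spell out the case split $|D|<(q+1)/2$ versus $|D|=(q+1)/2$ (to justify $D\notin\mathcal{B}_q$, since bad sets have size exactly $(q+1)/2$ and the borderline case forces $D=\mathcal{D}(X)$), whereas the paper states $D\notin\mathcal{B}_q$ directly and lets Corollary~\ref{stable} absorb both cases; your version is a touch more explicit but not a different argument.
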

\begin{proof}
Let $C$ be a clique in $X$ with $|C| \geq q-\sqrt{q}/2$. To show $X$ has the property $\operatorname{ST}(\sqrt{q}/2)$, it suffices to show that $C$ is contained in a canonical clique. Let $U=\pi(C)$ and let $D=D(U)$; Proposition~\ref{correspondence} implies that $U$ is a $D$-set with $D \subset \mathcal{D}(X)$. Thus, $|D| \leq \frac{q+1}{2}$ and $D \notin \mathcal{B}_q$. It follows from Corollary~\ref{stable} that $U$ can be extended a $D$-set $V$ with $|V|=q$. In particular, by Proposition~\ref{correspondence}, $C':=\pi^{-1}(V)$ is a maximum clique in $X$ with $C \subset C'$. Since $X$ has the strict-EKR property, $C'$ must be a canonical clique. Therefore, $X$ has the property $\operatorname{ST}(\sqrt{q}/2)$. The proof for the case that $q=p$ is a prime is similar. 
\end{proof}

\begin{rem}
Let $X$ be the Paley graph of order $q^2$. We know that $X$ has the strict-EKR property \cite{B84}. If we can show that $\mathcal{D}(X) \notin \mathcal{B}_q$, then one can apply Proposition~\ref{EKR->ST} to get the first nontrivial progress towards the conjecture by Baker et. al~\cite{BEHW96} mentioned in the introduction. However, it seems more inputs from algebraic geometry are needed to understand the distribution of rational points of conics. 
\end{rem}

Now we are ready to present the proof of Theorem~\ref{thmST1}.

\begin{proof}[Proof of Theorem~\ref{thmST1}]
By Proposition~\ref{prop1}, as $q \to \infty$, almost all Peisert-type graphs of type $(\frac{q+1}{2},q)$ have the strict-EKR property. By Corollary~\ref{B_q}, as $q \to \infty$, almost all Peisert-type graphs $X$ of type $(\frac{q+1}{2},q)$ satisfies $\mathcal{D}(X) \notin \mathcal{B}_q$ since the map $X \mapsto \mathcal{D}(X)$ is injective, and $q^5/\binom{q+1}{(q+1)/2} \to 0$. Therefore, Proposition~\ref{EKR->ST} allows us to conclude that as $q \to \infty$, almost all Peisert-type graphs of type $(\frac{q+1}{2},q)$ have the property $\operatorname{ST}(\sqrt{q}/2)$.
\end{proof}

Finally, we discuss the situation where $q=p$ is a prime. In this case, we can prove stronger results with the help of Theorem~\ref{direction}.

\begin{prop}\label{prop2}
As $p \to \infty$, almost all Peisert-type graphs of type $(\lfloor \frac{2p-2}{3} \rfloor,p)$ have the strict-EKR property.
\end{prop}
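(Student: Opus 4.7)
The plan is to follow the counting strategy of Proposition~\ref{prop1}, but replace the subspace input from Theorem~\ref{subspacethm} with the much stronger direction-set theorems of Theorem~\ref{direction}, which are available precisely because $q=p$ is prime. As before, I would count Peisert-type graphs $X=\operatorname{Cay}(\F_{p^2}^+,S)$ of type $(m,p)$ with $m=\lfloor\frac{2p-2}{3}\rfloor$ that admit at least one non-canonical maximum clique $C$ containing $0$. Since the canonical cliques $c_i\F_p$ always have size $p$, any such $C$ satisfies $|C|=p$. Via Proposition~\ref{correspondence}, $C$ corresponds to a subset $U=\pi(C)\subset\AG(2,p)$ with $|U|=p$ and $D(U)\subset\mathcal{D}(X)$, and the non-canonicity of $C$ translates into the points of $U$ not being collinear, i.e., $|D(U)|\geq 2$.

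The crucial step is pinning down $|D(U)|$. By R\'edei--Megyesi, $|D(U)|\geq\frac{p+3}{2}$; by G\'acs, if $|D(U)|>\frac{p+3}{2}$ then $|D(U)|\geq\lfloor\frac{2p+1}{3}\rfloor$. A short case check on $p\pmod 3$ shows $\frac{p+3}{2}\leq m=\lfloor\frac{2p-2}{3}\rfloor<\lfloor\frac{2p+1}{3}\rfloor$ for all sufficiently large $p$, so the constraint $|D(U)|\leq|\mathcal{D}(X)|=m$ rules out G\'acs's second alternative and forces $|D(U)|=\frac{p+3}{2}$ exactly. Lov\'asz--Schrijver's classification then pins $U$ down to the $\operatorname{AGL}(2,p)$-orbit of a single set, namely $U_0=\{(x,x^{(p+1)/2}):x\in\F_p\}$, so the total number of non-canonical $U$ is at most $|\operatorname{AGL}(2,p)|=p^2(p^2-1)(p^2-p)<p^6$.

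With the obstruction class restricted to a polynomial-size family, the density estimate becomes routine. For each such $U$, the number of type-$(m,p)$ graphs with $D(U)\subset\mathcal{D}(X)$ is $\binom{(p-1)/2}{m-(p+3)/2}$, obtained by freely choosing the remaining $m-\frac{p+3}{2}$ cosets from the $\frac{p-1}{2}$ unused ones. Thus the density of graphs lacking the strict-EKR property is bounded by
\[
p^6\cdot\frac{\binom{(p-1)/2}{m-(p+3)/2}}{\binom{p+1}{m}}=p^6\cdot\prod_{j=0}^{(p+1)/2}\frac{m-j}{p+1-j}.
\]
Each factor is at most $\frac{m}{p+1}<\frac{2}{3}$, so the product is bounded by $(2/3)^{(p+3)/2}=\exp(-\Omega(p))$, which overwhelms the $p^6$ factor and forces the density to tend to $0$. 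I expect the main technical hinge to be precisely the gap theorem of G\'acs combined with Lov\'asz--Schrijver: together they reduce a potentially intractable enumeration over all possible direction-set sizes to a single classified case admitting an explicit polynomial bound, and this is exactly what the prime hypothesis buys us.
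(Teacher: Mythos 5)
Your proof is correct and follows essentially the same route as the paper: use the correspondence of Proposition~\ref{correspondence} to convert a non-canonical clique into a $D$-set $U$ of size $p$, invoke R\'edei--Megyesi and G\'acs to force $|D(U)|=\frac{p+3}{2}$ (since $\lfloor\frac{2p+1}{3}\rfloor>m=\lfloor\frac{2p-2}{3}\rfloor$), apply Lov\'asz--Schrijver to place $U$ in a single affine orbit, and then do the same binomial density estimate. The only cosmetic difference is that you bound the number of candidate direction sets by $|\operatorname{AGL}(2,p)|<p^6$, whereas the paper observes that $D(U)$ is translation-invariant and so only the linear part matters, giving the slightly sharper $|\GL_2(\F_p)|<p^4$; both are polynomial in $p$ and are swamped by the exponential decay $(2/3)^{(p+3)/2}$, so the conclusion is unaffected.
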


\begin{proof}
Let $C$ be a non-canonical clique (containing 0) in a Peisert-type graph $X$ of type $(\lfloor \frac{2p-2}{3} \rfloor,p)$. By Proposition~\ref{correspondence}, 
$U=\pi(C)$ is a $D$-set for some $D \subset \mathcal{D}(X)$ with $|D| \geq 2$. Note that if $|D|>\frac{p+3}{2}$, then $|D|\geq \lfloor \frac{2p+1}{3} \rfloor$ by Theorem~\ref{direction}, which is impossible since $|\mathcal{D}(X)|=\lfloor \frac{2p-2}{3} \rfloor$. Therefore, in view of Theorem~\ref{direction}, we must have $|D(U)|=\frac{p+3}{2}$; furthermore, $U$ is affinely equivalent to the set $\{(x,x^{(p+1)/2}): x \in \F_p\}$. It follows that there exists an invertible matrix $M \in \GL_2(\F_p)$ such that
$$
U=M\{(x,x^{(p+1)/2}) :x \in \F_p\}.
$$
Since $|\GL_2(\F_p)|<p^4$, the number of possible direction sets $D=D(U)$ is at most $p^4$. 

As a quick summary, if $X$ is a Peisert-type graph $X$ of type $(\lfloor \frac{2p-2}{3} \rfloor,p)$ without the strict-EKR property, then it must contain a 
non-canonical clique $C$ such that $|D(\pi(C))|=\frac{p+3}{2}$ and $D(\pi(C)) \subset \mathcal{D}(X)$; furthermore, the number of possible candidates for $D(\pi(C))$ is at most $p^4$. This allows us to mimic the proof of Proposition~\ref{prop1} and conclude that the density of Peisert-type graphs $X$ of type $(m,p)$ with $m=\lfloor \frac{2p-2}{3} \rfloor$ such that the strict-EKR property fails is at most
$$
p^4 \cdot \frac{\binom{(p+1)-\frac{p+3}{2}}{m-\frac{p+3}{2}}}{\binom{p+1}{m}} < \frac{p^4}{(\frac{p+1}{m})^{(p+3)/2}}< \frac{p^4}{(\frac{3}{2})^{(p+3)/2}} \to 0
$$
as $p \to \infty$. This completes the proof.
\end{proof}

When $q=p^2$, we can also provide the following improvement on Proposition~\ref{prop1}:

\begin{prop}
As $p \to \infty$, almost all Peisert-type graphs of type $(\frac{p^2+p}{2},p^2)$ have the strict-EKR property.
\end{prop}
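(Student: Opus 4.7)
The plan is to generalize the counting argument of Proposition~\ref{prop2} to $q=p^2$, with Theorem~\ref{p^2} playing the role of Theorem~\ref{direction}, and with the Blokhuis--Ball--Brouwer--Storme--Sz\H{o}nyi classification~\cite{BBBSS} handling the small-direction-set regime. A preliminary observation is that here the density $m/(q+1) = (p^2+p)/(2(p^2+1))$ exceeds $1/2$, so Theorem~\ref{subspacethm} is unavailable and I must argue entirely through the direction correspondence of Proposition~\ref{correspondence}.

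Starting from a Peisert-type graph $X$ of type $(\tfrac{p^2+p}{2}, p^2)$ without the strict-EKR property, vertex transitivity supplies a non-canonical maximum clique $C$ with $0 \in C$ and $|C|=p^2$. Setting $U := \pi(C)$ gives $|U|=p^2$, $|D(U)| \geq 2$, and $D(U) \subset \mathcal{D}(X)$ with $|\mathcal{D}(X)|=(p^2+p)/2$. The bound $|D(U)| \leq (p^2+p)/2$ immediately rules out the ``large'' alternative $|D(U)| \geq (p^2+p)/2+1$ in Theorem~\ref{p^2}, so I would split on $|D(U)|$: if $|D(U)| \geq (p^2+3)/2$, Theorem~\ref{p^2} forces $U$ to be affinely equivalent to $\{(x, x^{(p^2+1)/2}) : x \in \F_{p^2}\}$; if $|D(U)| < (p^2+3)/2$, I invoke \cite{BBBSS}, whose specialization to $q=p^2$ yields $|D(U)|=p+1$ with $U$ an affine $\F_p$-Baer subplane, hence affinely equivalent to $\F_p^2 \subset \F_{p^2}^2$. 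In either case $U$ lies in a single $\operatorname{AGL}_2(\F_{p^2})$-orbit, so the number of candidate direction sets $D=D(U)$ is at most $|\operatorname{AGL}_2(\F_{p^2})| \leq p^{12}$.

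To finish, I would mimic the density estimate of Proposition~\ref{prop2}: for each fixed direction set $D$ of size $k$ the fraction of Peisert-type graphs of the required type with $D \subset \mathcal{D}(X)$ equals $\prod_{i=0}^{k-1} \frac{(p^2+p)/2 - i}{p^2+1-i} \leq \bigl(\tfrac{1}{2}+O(1/p)\bigr)^k$. Summing over the two cases ($k=p+1$ or $k=(p^2+3)/2$) and over the $\leq p^{12}$ direction sets in each, the proportion of bad graphs is bounded by $O(p^{12}/2^{p+1})$, which tends to $0$. The hard part will be justifying the second case: Theorem~\ref{p^2} is silent when $|D(U)| < (p^2+3)/2$, and only the BBBSS subfield classification for $q=p^2$ forces the rigid structure $|D(U)|=p+1$ with $U$ a Baer subplane; this rigidity is what keeps the count of candidate direction sets polynomial in $p$, without which the density bound would collapse.
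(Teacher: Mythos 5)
Your proposal is essentially the argument the paper intends when it says to ``mimic the proof of Proposition~\ref{prop1} and Proposition~\ref{prop2} with the help of Theorem~\ref{p^2}'': split on $|D(U)|$, use Theorem~\ref{p^2} when $|D(U)|\geq\frac{p^2+3}{2}$ (the upper alternative is killed since $|\mathcal{D}(X)|=\frac{p^2+p}{2}$), handle the small-direction regime via a subfield rigidity result, then count candidate direction sets and run the binomial density estimate. The logic is sound and the bound $O(p^{12}/2^{p+1})\to 0$ is correct.

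Three small points where your route diverges from (and is slightly less tight than) the paper's intended one. First, your remark that Theorem~\ref{subspacethm} is ``unavailable'' is overstated: while it fails for $X$ itself (since $m>\frac{q+1}{2}$), when $|D(U)|\leq\frac{q+1}{2}$ the clique $C$ is also a maximum clique of the sub-Peisert-type graph whose connection set corresponds exactly to $D(U)$, and Theorem~\ref{subspacethm} applies there to force $C$ to be an $\F_p$-subspace; this is precisely how ``mimicking Proposition~\ref{prop1}'' recovers Case~2 without importing the BBBSS theorem as an external black box (though invoking BBBSS is of course also valid, and gives the sharper conclusion $|D(U)|=p+1$ rather than just $|D(U)|\geq p+1$ from Corollary~\ref{subspace_covering}). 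Second, for Case~2 you assert that every $\F_p$-linear $U$ in this regime is affinely equivalent to $\F_p^2$; this is true (any two $\F_p$-linearly independent, $\F_{p^2}$-independent vectors can be mapped to the standard basis), but you state it without proof, and the Grassmannian count used in Proposition~\ref{prop1} ($\#\operatorname{Gr}(2,4)(\F_p)\leq p^8$) avoids needing any single-orbit claim. Third, direction sets are translation-invariant, so $\GL_2(\F_{p^2})$ (giving $<p^8$) suffices in place of $\operatorname{AGL}_2(\F_{p^2})$; your $p^{12}$ bound is looser but harmless.
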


\begin{proof}
By Proposition~\ref{prop1}, as $p \to \infty$, almost all Peisert-type graphs of type $(\frac{p^2+1}{2},p^2)$ have the strict-EKR property. To prove the stronger result, one can mimic the proof of Proposition~\ref{prop1} and Proposition~\ref{prop2} with the help of Theorem~\ref{p^2}. We omit the detailed analysis since the proof would be very similar.
\end{proof}

We conclude the paper by proving Theorem~\ref{thmST2}.

\begin{proof}[Proof of Theorem~\ref{thmST2}]
It follows from Proposition~\ref{prop1}, Proposition~\ref{EKR->ST}, and Proposition~\ref{prop2}. The proof is similar to the proof of Theorem~\ref{thmST1}. 
\end{proof}

\section*{Acknowledgement}
The author thanks Shamil Asgarli and Venkata Raghu Tej Pantangi for many helpful discussions. 
The author is also grateful to the referees for valuable corrections and suggestions.

\bibliographystyle{abbrv}
\bibliography{main}

\end{document}